\newtheorem{thm}{Theorem}[section]
\newtheorem{lemma}[thm]{Lemma}
\newtheorem{prop}[thm]{Proposition}
\newtheorem{cor}[thm]{Corollary}
\theoremstyle{definition}
\theoremstyle{definition}
\numberwithin{equation}{section}
\def\SS{S^{N-1}}
\def\o{\Omega}
\def\oo{\overline\Omega}
\def\ve{\varepsilon}
\def\K1{K_{\alpha, \beta}}
\def\R{\mathbb{R}}
\def\Co{\mathcal{C}_{\Omega}}
\newcommand{\abs}[1]{\lvert#1\rvert}
\title{Elliptic inequalities with nonlinear convolution and Hardy terms in cone-like domains}
\author{Marius Ghergu\footnote{School of Mathematics and Statistics,
        University College Dublin, Belfield, Dublin 4, Ireland;
        {\tt marius.ghergu@ucd.ie}; ORCID: 0000-0001-9104-5295} \;\footnote{Institute of Mathematics Simion Stoilow of the Romanian Academy, 21 Calea Grivitei St., 010702 Bucharest, Romania}
        $\;\;$            and    $\;$
    Zhe Yu\footnote{School of Mathematics and Statistics,
        University College Dublin, Belfield, Dublin 4, Ireland;
        {\tt zhe.yu@ucdconnect.ie }; ORCID: 0000-0002-2261-4981} }
\date{}
\begin{document}

\maketitle

\begin{abstract}
\vspace{0.3cm} 
\noindent We study the inequality $\displaystyle -\Delta u -  \frac{\mu}{\abs{x}^2} u \geq (|x|^{-\alpha} * u^p)u^q$ in an unbounded cone $\mathcal{C}_\Omega^\rho\subset \mathbb{R}^N$ ($N\geq 2$) generated by a subdomain $\Omega$ of the unit sphere $S^{N-1}\subset \mathbb{R}^N,$ $p, q, \rho>0$,  $\mu\in \mathbb{R}$ and  $0\leq \alpha < N$.  In the above, $|x|^{-\alpha} * u^p$ denotes the standard convolution operator in the cone $\mathcal{C}_\Omega^\rho$. We discuss the existence and nonexistence of positive solutions in terms of $N, p, q, \alpha, \mu$ and $\Omega$.  Extensions to systems of inequalities are also investigated.
\end{abstract}

\noindent{\bf Keywords:} Semilinear elliptic inequalities; Hardy terms; nonlinear convolution term; cone-like domains; a priori estimates

\medskip

\noindent{\bf 2020 AMS MSC:} 35J15; 35J47; 35A23; 35A01

\section{Introduction}
Let $\o$ be a subdomain of the unit sphere $\SS \subset \R^N (N \geq 2)$ such that $\o$ is connected and relatively open in $\SS$. Set
\[\mathcal{C}_{\Omega} = \{(r, \omega) \in \R^N: r > 0 \text{ and } \omega \in \Omega\}.\]
For any $\rho > 0$ and $0 \leq \rho_1 < \rho_2 < \infty$, we define
\begin{equation*}
\begin{split}
\mathcal{C}_{\Omega}^{\rho} &= \{(r, \omega) \in \R^N: r > \rho \text{ and } \omega \in \Omega\},\\
\mathcal{C}_{\Omega}^{\rho_1, \rho_2} &= \{(r, \omega) \in \R^N: \rho_1 < r < \rho_2 \text{ and } \omega \in \Omega\}.
\end{split}
\end{equation*}
This paper is concerned with the study of positive solutions to the following semi-linear elliptic inequality with convolution and Hardy terms:
\begin{equation}\label{eq0}
{\mathscr L}_H u :=  -\Delta u -  \frac{\mu}{\abs{x}^2} u \geq (|x|^{-\alpha}  * u^p)u^q \;\;\;\text{ in }\;\; \mathcal{C}_{\Omega}^{\rho}.
\end{equation}
In the above inequality we assume $\mu\in \R$, $p, q>0$  and 
$0\leq \alpha<N$. The quantity $|x|^{-\alpha}\ast u^p$ represents the convolution operation in $\mathcal{C}_\Omega^\rho$ given by
$$ 
|x|^{-\alpha}\ast
u^p=\int_{\mathcal{C}_\Omega^\rho} \frac{u^p(y)}{|x-y|^{\alpha}} dy\,, \quad x\in \mathcal{C}_\Omega^\rho.
$$ 

\medskip

In this paper we shall be interested in classical positive solutions of \eqref{eq0}, that is, functions $u \in  C^2(\mathcal{C}_{\Omega}^{\rho})\cap C(\overline{\mathcal{C}_{\Omega}^{\rho}})$ such that  
\begin{equation}\label{KKK}
u(x)>0\;, \;\; |x|^{-\alpha} * u^p<\infty\quad\mbox{  for all }\; x\in {\mathcal C}_\Omega^\rho,
\end{equation}
and $u$ satisfies \eqref{eq0} pointwise in $\mathcal{C}_{\Omega}^\rho$. We do not prescribe any condition on  $u$ on the boundary $\partial \mathcal{C}_{\Omega}^\rho$ of the cone.

The prototype model
\begin{equation}\label{hartree}
-\Delta u = \big(|x|^{-1}\ast u^2\big) u \quad\mbox{ in }\; \R^3, 
\end{equation}
has been around for nearly a century, being introduced in 1928 by D.R. Hartree \cite{Har28a,Har28b,Har28c} in the form of time-dependent equations 
given by
\begin{equation}\label{hartree0}
i\psi_t+\Delta \psi+(|x|^{-1} \ast |\psi|^2)\psi=\psi\quad\mbox{in }{\mathbb R}^3\times (0, \infty).
\end{equation}
We may regard \eqref{hartree0} in connection with the Schr\"odinger-Newton (or Schr\"odinger-Poisson) equation, namely
\begin{equation}\label{spoisson}
i\psi_t+\Delta \psi+\phi \psi=\psi\quad\mbox{in }{\mathbb R}^3\times (0, \infty),  
\end{equation}
where $\phi$ is a gravitational potential representing the interaction of the particle with its own gravitational field and  satisfies the Poisson equation
$$
\Delta \phi=4\pi G|\psi|^2\quad\mbox{ in } \R^3.
$$ 
Thus, one may write the gravitational potential  as a convolution, namely 
$$
\phi(x)=4\pi G \big(|x|^{-1}\ast |\psi|^2\big),
$$ 
which plugged into \eqref{spoisson} leads us (modulo a scaling of coefficients) to \eqref{hartree0}. Looking for a solution $\psi$  of \eqref{hartree0} in the form $\psi(x,t)=e^{-it}u(x)$ we see that $u$ satisfies \eqref{hartree}. 

More recently, the equation \eqref{hartree} is encountered in the literature under the name of Choquard or Choquard-Pekar equation. Precisely,  \eqref{hartree} was introduced in 1954  by S.I. Pekar \cite{Pek54} as a model in quantum theory of a Polaron at rest (see also
\cite{DA10}) and in 1976, P. Choquard  used \eqref{hartree} in a certain approximation to Hartree-Fock theory of one component plasma (see \cite{Lie76}). 
The first mathematical study of \eqref{hartree} appeared in the mid 1970s and is due to E.H. Lieb \cite{Lie76}, followed by P.-L. Lions' works \cite{Lio80,Lio84}.

The inequality
\begin{equation}\label{mv}
-\Delta u \geq (|x|^{-\alpha}\ast u^p)u^q\quad\mbox{ in }\R^N\setminus\overline{B}_1,
\end{equation}
was first discussed in  \cite{MV13a}. Let us note that \eqref{mv} corresponds to \eqref{eq0} in the cone $\mathcal{C}^1_\Omega$ where $\Omega=S^{N-1}$. The approach in \cite{MV13a} relies on the so-called Agmon-Allegretto-Piepenbrink positivity principle.
Quasilinear versions of \eqref{mv} (again in exterior domains)  are studied in \cite{GKS20} while the polyharmonic case is discussed in \cite{GMM21}( see also \cite{CZ1, CZ2, CZ3,  G23, GT16, GTbook} for more contexts in which \eqref{mv} occurs). 

To the best of our knowledge, this is the first work dealing with semilinear elliptic inequalities in cone-like domains featuring convolution terms. The local cases 
$$-\Delta u=u^p\quad\mbox{ and } \quad -\Delta u\geq u^p
$$ 
in a cone-like domain have been studied in \cite{B92, BE90, KLM05, KLMS05, LLM06}. In the current work we derive an a priori estimate  result which we provide in Proposition \ref{pconea1} below.  Our method is inspired by \cite{BP01} (see also \cite{GKS20}) where integral estimates are obtained to study local inequalities of type $-\Delta_p u\geq |x|^{\sigma}u^q$ in $\R^N\setminus B_1$. Due to the geometric features of our cone domain $\mathcal{C}_\Omega^\rho$, we cannot handle $W^{1,1}_{loc}$-solutions as in \cite{BP01} and \cite{GKS20}, and thus have to restrict ourselves to the class $C^2(\mathcal{C}_{\Omega}^{\rho})\cap C(\overline{\mathcal{C}_{\Omega}^{\rho}})$. The test function we construct (see \eqref{tf} below) is not a proper $C^\infty_c(\mathcal{C}_\Omega^\rho)$ function, rather a mere element in the space $C^2(\overline{\mathcal{C}_{\Omega'}^{R, 4R}})$, where $\Omega'\subset\subset \Omega$ is a subdomain and $R>\rho$ is large. Nonetheless, Proposition \ref{pconea1} below provides us with the right tool to derive the  nonexistence of a positive solution to \eqref{eq0} under some conditions on the exponents and parameters.

Apart from the a priori estimates described above, another crucial tool in our approach is the Hardy inequality on cone-like domains established in \cite{LLM06}:
\begin{equation}\label{hardy1}
\int_{\mathcal{C}_\Omega^\rho}|\nabla \varphi|^2\geq C_{H, \Omega}\int_{\mathcal{C}_\Omega^\rho}\frac{\varphi^2}{|x|^2} dx \quad\mbox{ for all }\varphi\in C^\infty_c(\mathcal{C}_\Omega^\rho),
\end{equation}
where $C_{H, \Omega}=\lambda_1 +(\frac{N - 2}{2})^2$ and $\lambda_1$ denotes the first eigenvalue of the Laplace-Beltrami operator on $\Omega$ subject to Dirichlet boundary condition. As obtained in \cite{LLM06} the constant $C_{H, \Omega}$ in \eqref{hardy1} is optimal.
Throughout this paper, we denote by $\phi$ the corresponding eigenfunction which we normalize as $\phi>0$ in $\Omega$ and $\max_{\overline\Omega}\phi=1$.

For $\mu\leq C_{H, \Omega}$ the quadratic equation
\begin{equation}\label{eqbeta}
\gamma(\gamma+N-2)=\lambda_1-\mu
\end{equation}
has two real roots which satisfy $\gamma_*<0\leq \gamma^*$. 
 
Our first result on \eqref{eq0} establishes conditions under which no positive solutions exist.
\begin{thm}\label{thm1} {\rm (Nonexistence)}

Assume $0\leq \alpha<N$, $p, q > 0$ and $\mu \in \R$.  
\begin{enumerate}
\item[{\rm (a)}]  If $\mu>C_{H, \Omega}$ then \eqref{eq0} has no positive solutions. 

\item[{\rm (b)}]  If $\mu\leq C_{H, \Omega}$ and one of the following conditions hold, then \eqref{eq0} has no positive solutions. 
\begin{enumerate}
\item[{\rm (i)}]  $\displaystyle  p\leq \frac{N-\alpha}{|\gamma_*|}$.
\smallskip

\item[{\rm (ii)}]  $\displaystyle p + q < 1 + \frac{N - \alpha + 2}{\abs{\gamma_*}}$.\smallskip

\item[{\rm (iii)}]   $\displaystyle  p + q = 1 + \frac{N - \alpha + 2}{|\gamma_*|}$ and $q> 1$.\smallskip

\end{enumerate}

\end{enumerate}

\end{thm}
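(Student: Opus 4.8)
The plan is to treat the two parts by quite different mechanisms. For part (a), the idea is purely spectral: if $\mu > C_{H,\Omega}$ then the quadratic form associated to $\mathscr{L}_H$ is not positive definite, so the operator admits a negative direction on compactly supported functions, which is incompatible with the existence of a positive supersolution. Concretely, since the right-hand side $(|x|^{-\alpha}*u^p)u^q$ is strictly positive wherever $u>0$, any positive solution of \eqref{eq0} is in particular a positive supersolution of $\mathscr{L}_H u \ge 0$ in $\mathcal{C}_\Omega^\rho$. By the Agmon-Allegretto-Piepenbrink principle (as used in \cite{MV13a}), the existence of such a positive supersolution forces $\int_{\mathcal{C}_\Omega^\rho}(|\nabla\varphi|^2 - \mu|x|^{-2}\varphi^2)\,dx \ge 0$ for all $\varphi\in C^\infty_c(\mathcal{C}_\Omega^\rho)$. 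But the optimality of the constant $C_{H,\Omega}$ in \eqref{hardy1} means that for $\mu>C_{H,\Omega}$ one can choose a test function $\varphi$ making this integral negative, a contradiction. (If one wishes to avoid invoking AAP, the same contradiction can be reached by testing \eqref{eq0} directly against $\varphi^2/u$ for $\varphi\in C^\infty_c$, using $-\Delta u/u \ge \mu/|x|^2$ pointwise and integrating by parts, which yields the same Hardy-type inequality with constant $\mu$.)

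For part (b), the strategy is to combine the a priori integral estimate of Proposition \ref{pconea1} with a bootstrap on the growth of a suitable weighted integral of $u$. Introduce the weighted average $M(R) := \int_{\mathcal{C}_\Omega^{R,2R}} u\,\phi(\omega)\,|x|^{\gamma_*}\,dx$ (or a similar quantity adapted to the statement of Proposition \ref{pconea1}); the exponent $\gamma_*<0$ is the "slow-decay" root of \eqref{eqbeta} and is chosen because $|x|^{\gamma_*}\phi(\omega)$ is, roughly, the relevant harmonic-type weight for $\mathscr{L}_H$ on the cone that makes the boundary terms in the test-function computation vanish. First I would show, using \eqref{KKK} together with a Harnack-type lower bound for positive supersolutions on the truncated cone $\mathcal{C}_{\Omega'}^{R,4R}$, that the convolution term is bounded below in terms of $M$: on $\mathcal{C}_\Omega^{R,2R}$ one has $|x|^{-\alpha}*u^p \gtrsim R^{-\alpha}\int_{\mathcal{C}_\Omega^{R,2R}} u^p \gtrsim R^{-\alpha-N}\big(\int_{\mathcal{C}_\Omega^{R,2R}} u\big)^p$ by Jensen, and combined with the Harnack inequality this produces a lower bound of the form $\gtrsim R^{-\alpha + (\gamma_*-\text{stuff})} M(R)^{p}$. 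Plugging this lower bound into \eqref{eq0} and testing against the function \eqref{tf} gives, via Proposition \ref{pconea1}, an inequality of the shape $M(R)^{p+q} \lesssim R^{\theta} M(cR)$ (all exponents explicit in $N,\alpha,\gamma_*,p,q$). Iterating this recursion along a geometric sequence $R_k = c^k R_0$ forces $M(R)\to 0$, hence $u\equiv 0$, unless the exponents are in the "admissible" range; the three cases (i), (ii), (iii) are precisely the three regimes in which the iteration closes — (i) handles the case where even a single step already gives a contradiction ($p$ too small relative to $N-\alpha$ and $|\gamma_*|$), (ii) is the subcritical iteration where the geometric blow-up of exponents wins, and (iii) is the critical borderline that must be pushed through by a more delicate logarithmic/Gronwall argument, which is why the extra hypothesis $q>1$ appears.

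The main obstacle will be the critical case (iii). In the strictly subcritical range (ii) the recursion $M(R)^{p+q}\lesssim R^\theta M(cR)$ has $p+q>1$ with room to spare, so iterating makes $M$ decay super-exponentially and kills $u$ quickly; case (i) is a direct contradiction requiring essentially no iteration. But at the critical exponent $p+q = 1 + (N-\alpha+2)/|\gamma_*|$ the scaling exponents balance exactly, the naive iteration only gives $M(R)\lesssim M(cR)$ with no gain, and one must instead extract the decay from a more refined estimate — typically by choosing the cutoff scales more carefully (working with $\mathcal{C}_{\Omega'}^{R,\lambda R}$ for $\lambda$ close to $1$ and summing over many dyadic-like shells) to produce a differential inequality for $R\mapsto \int_\rho^R M(s)\,ds/s$ whose integration uses $q>1$ in an essential way to force the total mass to be finite and then zero. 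A secondary technical point throughout is the justification of the integration by parts and the control of boundary terms on $\partial\mathcal{C}_\Omega^\rho$: since no boundary condition is imposed on $u$, one must restrict to the strict subcone $\Omega'\subset\subset\Omega$ where the test function \eqref{tf} vanishes on the lateral boundary, and then let $\Omega'\uparrow\Omega$ at the end, keeping track of how $\lambda_1(\Omega')\to\lambda_1(\Omega)$ and hence $\gamma_*(\Omega')\to\gamma_*(\Omega)$ — this limiting procedure is routine but is where the "cannot handle $W^{1,1}_{loc}$" remark from the introduction becomes operative.
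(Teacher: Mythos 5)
Your part (a) is fine and coincides with the paper's argument (Lemma \ref{hrd}: test $\mathscr{L}_H u\ge 0$ against $\varphi^2/u$ and invoke the optimality of $C_{H,\Omega}$ in \eqref{hardy1}). Part (b), however, has genuine gaps. The central missing ingredient is the sharp pointwise lower bound $u\ge c_0|x|^{\gamma_*}$ on subcones, valid for \emph{any} positive supersolution of $\mathscr{L}_H u\ge 0$ (Lemma \ref{lg}, quoted from \cite{LLM06}); every case of (b) in the paper leans on it, and your Harnack-type averages do not replace it. Most visibly, case (i) cannot be an output of a recursion in the combined exponent $p+q$: the hypothesis $p\le (N-\alpha)/|\gamma_*|$ carries no restriction on $q$, and for such $p$ with $q$ huge the pair $(p,q)$ lies far above your claimed critical balance, so no estimate of the shape $M(R)^{p+q}\lesssim R^{\theta}M(cR)$ can detect it. The actual mechanism is that $u\ge c_0|x|^{\gamma_*}$ together with Lemma \ref{esm}(ii) forces $|x|^{-\alpha}\ast u^p\equiv+\infty$, contradicting the finiteness requirement in \eqref{KKK}; your proposal never makes this divergence argument.

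For case (ii), your recursion is neither derived nor analysed, and it is not needed: the paper obtains the threshold in a single step, by combining the a priori estimate \eqref{des} of Proposition \ref{pconea1} (via H\"older, with $m$ close to $1$ when $p+q>1$) with the lower bound $u\ge c_0|x|^{\gamma_*}$, and comparing powers of $R$ as $R\to\infty$; the subcase $p+q\le 1$ needs a separate treatment (weak Harnack plus \eqref{tata} to produce a \emph{growing} lower bound $u\gtrsim|x|^{\zeta}$, $\zeta>0$, again contradicting \eqref{KKK}), which your uniform iteration does not address. For the critical case (iii), your ``logarithmic/Gronwall refinement'' is only a hope, not an argument; the paper instead inserts $|x|^{-\alpha}\ast u^p\ge C|x|^{N-\alpha+p\gamma_*}$ (from Lemma \ref{lg} and Lemma \ref{esm}(ii)) to reduce \eqref{eq0} to the local inequality $\mathscr{L}_H u\ge |x|^{\beta}u^q$ with $\beta=N-\alpha+p\gamma_*$, and then applies the sharp Liouville result Lemma \ref{pap} of \cite{LLM06}, whose hypothesis is exactly where $q>1$ enters. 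Without Lemma \ref{lg} and Lemma \ref{pap} (or full reproofs of them), your outline does not close in any of the three cases. A minor further point: in the limiting procedure $\Omega_\ve\uparrow\Omega$ one only needs $\phi_\ve\to\phi$ pointwise; there is no need to track $\lambda_1(\Omega')\to\lambda_1(\Omega)$ or a varying $\gamma_*$, since Lemma \ref{lg} is applied with the $\gamma_*$ of $\Omega$ itself.
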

\noindent We next turn to the existence  of positive solutions to \eqref{eq0}. Under the assumption  
\begin{equation}\label{qq}
q>1+\max\left\{0,\frac{2-\alpha}{|\gamma_*|}\right\},
\end{equation}
we provide optimal conditions in terms of $p, q, \alpha$ and $\mu$ for the existence of a positive solution. 
Precisely, we have:

\begin{thm}\label{thm2} {\rm (Existence)}

Assume $\mu\leq C_{H, \Omega}$, $0\leq \alpha<N$ and \eqref{qq} holds.
Then, inequality \eqref{eq0} has a positive solution in $\mathcal{C}_{\Omega}^{\rho}$ if and only if 
\begin{equation}\label{pq}
p>\frac{N-\alpha}{|\gamma_*|}\quad\mbox{ and }\;  \quad p + q > 1 + \frac{N - \alpha + 2}{|\gamma_*|}.
\end{equation}
\end{thm}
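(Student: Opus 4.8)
\noindent\emph{Necessity (``only if'').} This is immediate from Theorem \ref{thm1}. Assume $\mu\le C_{H,\Omega}$, that \eqref{qq} holds (so in particular $q>1$), and that \eqref{eq0} has a positive solution. If \eqref{pq} were to fail, then at least one of the following would hold: $p\le(N-\alpha)/|\gamma_*|$; or $p+q<1+(N-\alpha+2)/|\gamma_*|$; or $p+q=1+(N-\alpha+2)/|\gamma_*|$ together with $q>1$. In each case part~(b) of Theorem \ref{thm1} excludes the existence of a positive solution of \eqref{eq0}, a contradiction. Hence \eqref{pq} holds.

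\medskip
\noindent\emph{Sufficiency (``if'').} The plan is to produce an explicit positive supersolution of \eqref{eq0}: since any function satisfying \eqref{eq0} pointwise together with \eqref{KKK} is by definition a solution of the inequality, this is enough. I would look for $u$ of the form
\[
u(r,\omega)=\varepsilon\, r^{\gamma_*}(\log r)^{\nu}\,\phi(\omega),
\]
with a fixed $\nu\in(0,1)$ and $\varepsilon>0$ small, where $\phi$ is the normalised first Dirichlet eigenfunction on $\Omega$ (if $\rho\le1$, replace $\log r$ by $\log(er/\rho)$, which stays $\ge1$ on $\{r>\rho\}$ and affects nothing below). Under the first inequality of \eqref{pq} one has $|\gamma_*|>0$, i.e. $\gamma_*<0$, so $u$ decays at infinity. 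Using ${\mathscr L}_H\big(f(r)\phi(\omega)\big)=\big(-f''-\tfrac{N-1}{r}f'+\tfrac{\lambda_1-\mu}{r^2}f\big)\phi$ and that $\gamma_*,\gamma^*$ are the roots of the indicial polynomial $\beta\mapsto\beta(\beta+N-2)-(\lambda_1-\mu)$ attached to \eqref{eqbeta}, a direct computation gives
\[
{\mathscr L}_H u=\nu\,\varepsilon\, r^{\gamma_*-2}(\log r)^{\nu-2}\big[(\gamma^*-\gamma_*)\log r+(1-\nu)\big]\phi\ \ge\ \nu(1-\nu)\,\varepsilon\, r^{\gamma_*-2}(\log r)^{\nu-2}\phi\ >\ 0,
\]
the bracket being positive since $\gamma^*\ge\gamma_*$ and $\nu<1$. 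The logarithmic factor is exactly what handles the critical Hardy case $\mu=C_{H,\Omega}$, in which $\gamma^*=\gamma_*$ is a double root and a bare power $r^{\gamma_*}\phi$ is ${\mathscr L}_H$-harmonic; when $\mu<C_{H,\Omega}$ one could instead use $u=\varepsilon r^{b}\phi$ with $\gamma_*<b<0$ suitably chosen.

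\medskip
Next I would estimate the nonlocal term. Since $0<\phi\le1$ we have $u^p(y)\le\varepsilon^p|y|^{\gamma_* p}(\log|y|)^{\nu p}$ on $\mathcal{C}_{\Omega}^{\rho}\subset\{|y|>\rho\}$, and splitting the convolution integral over $\{|y|<|x|/2\}$, $\{|x|/2<|y|<2|x|\}$ and $\{|y|>2|x|\}$ (with a separate, elementary estimate when $|x|$ is bounded) yields, provided $p>(N-\alpha)/|\gamma_*|$ — which is what makes the integral finite at infinity —
\[
|x|^{-\alpha}\ast u^p\ \le\ C\,\varepsilon^p\,|x|^{\theta}(\log|x|)^{\nu p}\quad\text{on }\overline{\mathcal{C}_{\Omega}^{\rho}},\qquad \theta:=\max\{\,N-\alpha+\gamma_* p,\ -\alpha\,\},
\]
with $C=C(N,\alpha,p,\gamma_*,\nu,\rho)$ (in the borderline case $|\gamma_*|p=N$ one gains a further power of $\log|x|$, which is harmless). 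Inserting these two bounds into \eqref{eq0}, the supersolution inequality ${\mathscr L}_H u\ge(|x|^{-\alpha}\ast u^p)u^q$ reduces, after dividing by $\varepsilon r^{\gamma_*-2}(\log r)^{\nu-2}\phi$ and using $\phi^{q-1}\le1$ (valid since $q>1$), to
\[
\nu(1-\nu)\ \ge\ C\,\varepsilon^{\,p+q-1}\,r^{E}(\log r)^{\,\nu(p+q-1)+2},\qquad E:=\theta+\gamma_*(q-1)+2.
\]
The heart of the matter is that $E<0$: in the branch $\theta=N-\alpha+\gamma_* p$ one finds $E=N-\alpha+2+\gamma_*(p+q-1)<0$ by the second inequality of \eqref{pq}, and in the branch $\theta=-\alpha$ one finds $E=2-\alpha+\gamma_*(q-1)<0$ by \eqref{qq} (automatic if $\alpha\ge2$, and exactly the content of \eqref{qq} if $\alpha<2$). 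Since $E<0$, $r\mapsto r^{E}(\log r)^{\nu(p+q-1)+2}$ tends to $0$ at infinity and is bounded on $[\rho,\infty)$; as $p+q-1>0$, the displayed inequality holds once $\varepsilon$ is small enough. Finally $u\in C^2(\mathcal{C}_{\Omega}^{\rho})\cap C(\overline{\mathcal{C}_{\Omega}^{\rho}})$ (the regularity being that of the first Dirichlet eigenfunction $\phi$), $u>0$ in $\mathcal{C}_{\Omega}^{\rho}$, and $|x|^{-\alpha}\ast u^p<\infty$ there because $\alpha<N$ and $p>(N-\alpha)/|\gamma_*|$; hence $u$ is a positive solution of \eqref{eq0}.

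\medskip
I expect the main obstacle to be the convolution estimate: unlike in the local problems one must control $|x|^{-\alpha}\ast u^p$ uniformly up to the lateral boundary and the vertex region $\{|x|\sim\rho\}$ of the cone, and track the two competing decay rates in $\theta$ — the ``diagonal'' contribution from $\{|y|\sim|x|\}$ versus the ``inner'' one from $\{|y|\sim\rho\}$ — through the logarithmic factors. A secondary, essentially bookkeeping, point is to verify that the three requirements on the ansatz (finiteness of $|x|^{-\alpha}\ast u^p$, and $E<0$ in each of its two branches) are compatible precisely under \eqref{pq} and \eqref{qq}; this is where the optimality of the conditions becomes visible, in tandem with Theorem \ref{thm1}, and where \eqref{qq} — in particular $q>1$ — is genuinely used, both to absorb $\phi^{q-1}$ near $\partial\Omega$ and to force the branch $E=2-\alpha+\gamma_*(q-1)$ to be negative.
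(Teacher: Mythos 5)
Your proposal is correct and follows essentially the paper's route: the necessity is read off from Theorem \ref{thm1}, and the sufficiency is obtained from an explicit supersolution of the form (first eigenfunction)$\times$(radial power with a logarithmic correction), bounded via the convolution estimate of Lemma \ref{esm}(iii) and closed by choosing the multiplicative constant small, exactly as in the paper's scaling $U=Cu$. The only difference is organizational: you use the single ansatz $\varepsilon\,r^{\gamma_*}(\log r)^{\nu}\phi(\omega)$ at the exact exponent $\gamma_*$ for both $\mu<C_{H,\Omega}$ and $\mu=C_{H,\Omega}$, whereas the paper perturbs the exponent to some $\gamma\in(\gamma_*,-\tfrac{N-2}{2})$ with a pure power in the subcritical case and reserves the logarithm for the critical case $\mu=C_{H,\Omega}$; both variants work because the inequalities in \eqref{pq} and \eqref{qq} are strict, so the power decay $r^{E}$ with $E<0$ absorbs the extra logarithmic factors your ansatz produces.
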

The existence and nonexistence regions for \eqref{eq0} in the positive quadrant of the $pq$-plane are depicted in Figure \ref{fig1} below in the case $0\leq \alpha<N$. 

\begin{figure}[ht!]\label{fig1}
\begin{center}
  \includegraphics[width=5.1in]{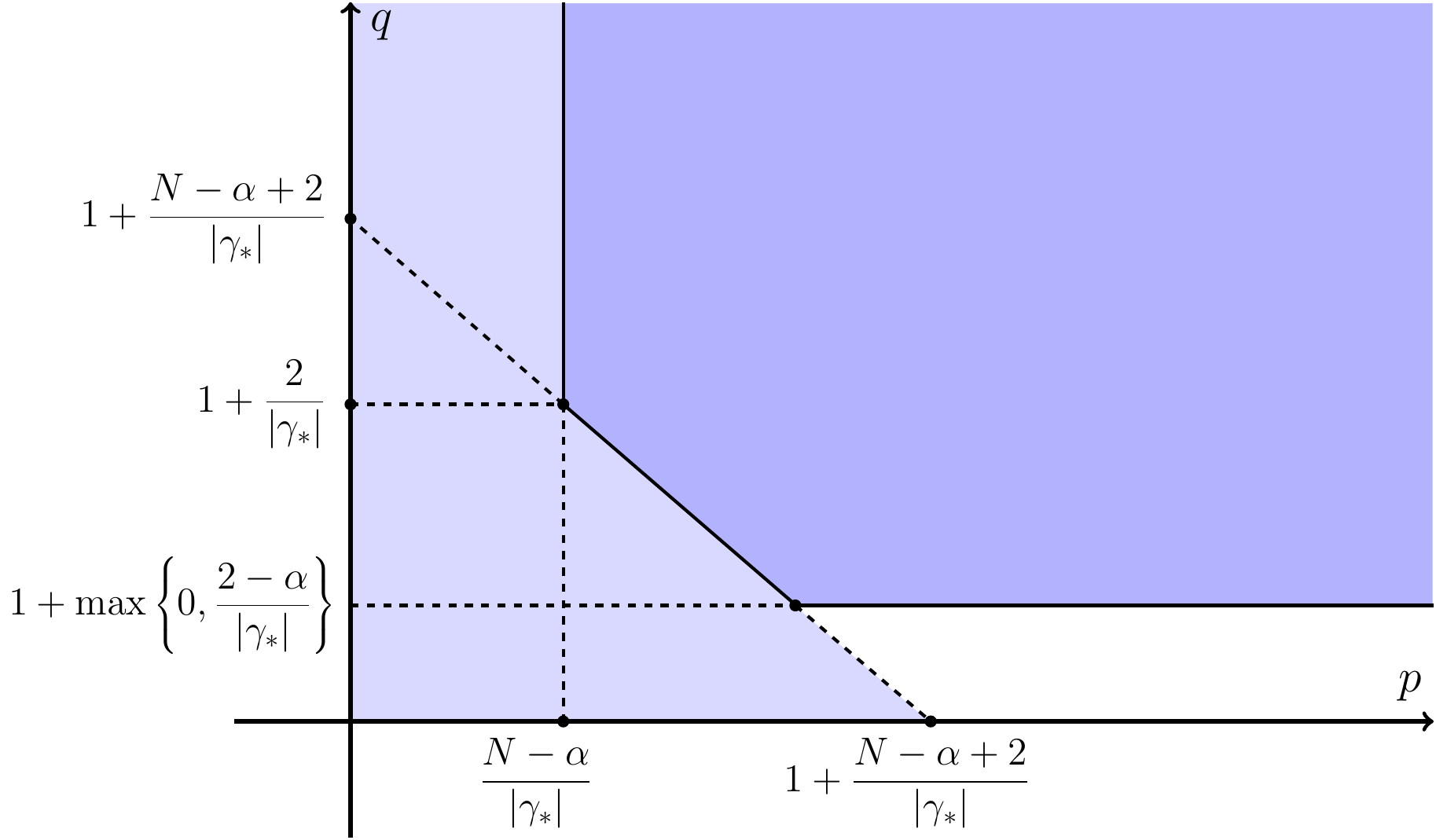}
  \caption{The existence region (dark shaded) and nonexistence region (light shaded) for positive solutions of \eqref{eq0} in the case $0\leq \alpha<N$. }
    \end{center}
\end{figure}
For $\alpha=0$ the inequality \eqref{eq0} reads 
\begin{equation}\label{zer}
\mathscr{L}_H u\geq \Big(\int_{\mathcal{C}_\Omega^\rho} u^p(y) dy\Big) u^q\quad\mbox{ in }\; \mathcal{C}_\Omega^\rho.
\end{equation}
As a consequence of Theorem \ref{thm1} and  Theorem \ref{thm2} we deduce the following result.

\begin{cor}\label{ao}
\begin{enumerate}
\item[{\rm (i)}] If $\mu> C_{H, \Omega}$ then  inequality \eqref{zer} has no positive solutions. 

\item[{\rm (ii)}]  If $\mu\leq C_{H, \Omega}$ and $q>1+\frac{2}{|\gamma_*|}$ then \eqref{zer} has a positive solution if and only if $p>\frac{N}{|\gamma_*|}$. 
\end{enumerate}
\end{cor}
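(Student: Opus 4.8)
The plan is to obtain both parts of Corollary~\ref{ao} as direct specializations of Theorem~\ref{thm1} and Theorem~\ref{thm2} to the case $\alpha=0$, since \eqref{zer} is precisely \eqref{eq0} with $\alpha=0$.

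For part (i), I would simply invoke Theorem~\ref{thm1}(a): the hypothesis $\mu>C_{H,\Omega}$ rules out positive solutions of \eqref{eq0} for every admissible $\alpha$, hence in particular for $\alpha=0$, which is exactly \eqref{zer}.

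For part (ii), the first step is to verify that the standing assumption \eqref{qq} of Theorem~\ref{thm2} is automatically satisfied here. With $\alpha=0$ and $\gamma_*<0$ we have $\max\{0,\tfrac{2-\alpha}{|\gamma_*|}\}=\tfrac{2}{|\gamma_*|}$, so \eqref{qq} reads exactly $q>1+\tfrac{2}{|\gamma_*|}$, which is our hypothesis. Thus Theorem~\ref{thm2} applies with $\alpha=0$, and \eqref{zer} admits a positive solution if and only if \eqref{pq} holds, i.e. $p>\tfrac{N}{|\gamma_*|}$ and $p+q>1+\tfrac{N+2}{|\gamma_*|}$. The only substantive observation is that, under $q>1+\tfrac{2}{|\gamma_*|}$, the second inequality in \eqref{pq} is implied by the first: adding $p>\tfrac{N}{|\gamma_*|}$ to $q>1+\tfrac{2}{|\gamma_*|}$ gives $p+q>1+\tfrac{N+2}{|\gamma_*|}$. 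Hence \eqref{pq} collapses to the single condition $p>\tfrac{N}{|\gamma_*|}$, and the claimed equivalence follows.

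There is no genuine obstacle; the corollary is a bookkeeping consequence of the two theorems, the sole point requiring a line of algebra being the implication just described. If one prefers an independent argument for the nonexistence half of (ii), the case $p\le N/|\gamma_*|$ can instead be read off directly from Theorem~\ref{thm1}(b)(i) with $\alpha=0$, while the existence half comes from the ``if'' direction of Theorem~\ref{thm2}.
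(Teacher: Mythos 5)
Your proposal is correct and matches the paper's intent exactly: the corollary is stated there as a direct specialization of Theorems \ref{thm1} and \ref{thm2} to $\alpha=0$, with no separate proof given. Your one substantive observation—that under $q>1+\frac{2}{|\gamma_*|}$ the condition $p>\frac{N}{|\gamma_*|}$ already forces $p+q>1+\frac{N+2}{|\gamma_*|}$, so \eqref{pq} reduces to the single inequality on $p$—is precisely the bookkeeping step the paper leaves implicit.
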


\noindent Our approach can be extended to  system of inequalities of type
\begin{equation}\label{mainsys}
\begin{cases}
\displaystyle {\mathscr L}_{H} u :=  -\Delta u -  \frac{\mu}{\abs{x}^2} u \geq \big(|x|^{-\alpha}\ast v^p\big) v^q \\[0.1in]
\displaystyle {\mathscr L}_{H}  v :=  -\Delta v -  \frac{\mu}{\abs{x}^2} v \geq u^s
\end{cases}
\quad\mbox{ in }\; \mathcal{C}_\Omega^\rho,
\end{equation}
where $p,q>0$ and $s> 1$.  The system \eqref{mainsys} contains a combination of the local nonlinearity  $u^s$  and nonlocal nonlinearities given by the convolution operator. As in the case of a single inequality, we say that $(u,v)$ is a positive solution of \eqref{mainsys} if:

\begin{itemize}
\item $u,v\in   C^2(\mathcal{C}_{\Omega}^{\rho})\cap C(\overline{\mathcal{C}_{\Omega}^{\rho}})$ and $\big(|x|^{-\alpha}\ast v^p\big)v^q<\infty$ for all $ x\in \mathcal{C}_\Omega^\rho$.
\item $u,v>0$ in $\mathcal{C}_\Omega^\rho$ satisfy \eqref{mainsys} pointwise.
\end{itemize}
Semilinear elliptic systems of inequalities in cone-like domains 
were discussed in \cite{Lap02, MP01} (see also \cite{Lap02b}  for time dependent  inequalities). 
In the case of \eqref{mainsys} we are able to identify a critical curve that separates the existence and nonexistence regions of positive solutions to \eqref{mainsys}. 

The main result on the system \eqref{mainsys} is stated below.
\begin{thm}\label{thsysinq}
Assume $\mu\in \R$, $p,q>0$ and $0\leq \alpha<N$. 
\begin{enumerate}
\item[\rm (a)] If $\mu>C_{H, \Omega}$ then \eqref{mainsys} has no positive solutions.

\item[\rm (b)] Assume $\mu\leq C_{H, \Omega}$.
\begin{enumerate}
\item[\rm (i)] If $s> 1$, $p+q\geq 2$ and 
$$
p\leq \frac{N-\alpha}{|\gamma_{*}|}\quad\mbox{or}\quad p+q<\frac{1}{s}\Big(1+\frac{2}{|\gamma_{*}|}\Big)+\frac{N-\alpha+2}{|\gamma_{*}|},
$$
then \eqref{mainsys} has no positive solutions.
\item[\rm (ii)]  If $\displaystyle s > 1+\frac{2}{|\gamma_{*}|}$, $\displaystyle q>1+\max\Big\{0,\frac{2-\alpha}{|\gamma_{*}|}\Big\}$ and 
$$
p> \frac{N-\alpha}{|\gamma_{*}|}\quad\mbox{and}\quad  p+q>\frac{1}{s}\Big(1+\frac{2}{|\gamma_{*}|}\Big)+\frac{N-\alpha+2}{|\gamma_{*}|},
$$
then \eqref{mainsys} has  positive solutions.
\end{enumerate}
\end{enumerate}
\end{thm}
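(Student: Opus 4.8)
Here is the plan of proof.

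\medskip

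The plan is to handle the three parts by, respectively, reusing the positivity argument behind Theorem~\ref{thm1}(a), adapting the integral estimates of Proposition~\ref{pconea1}, and exhibiting an explicit supersolution. Part~(a) is immediate: since $\big(|x|^{-\alpha}\ast v^{p}\big)v^{q}\ge 0$, the first inequality of \eqref{mainsys} yields $\mathscr{L}_{H}u\ge 0$, so $u>0$ is a classical supersolution of $-\Delta-\mu|x|^{-2}$ on $\mathcal{C}_{\Omega}^{\rho}$; by the Allegretto--Piepenbrink positivity principle this forces $\int_{\mathcal{C}_{\Omega}^{\rho}}|\nabla\varphi|^{2}\ge\mu\int_{\mathcal{C}_{\Omega}^{\rho}}\varphi^{2}/|x|^{2}$ for all $\varphi\in C^{\infty}_{c}(\mathcal{C}_{\Omega}^{\rho})$, which contradicts the optimality of $C_{H,\Omega}$ in \eqref{hardy1} as soon as $\mu>C_{H,\Omega}$. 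In other words, part~(a) is nothing but the proof of Theorem~\ref{thm1}(a) applied to the first component alone.

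For the nonexistence in~(b)(i), suppose $\mu\le C_{H,\Omega}$ and, for contradiction, that $(u,v)$ is a positive solution. Test both inequalities of \eqref{mainsys} against the cut-off used in Proposition~\ref{pconea1}, supported in $\mathcal{C}_{\Omega'}^{R,4R}$ and taken as a large power of a base cut-off so that, after integrating by parts twice, $|\mathscr{L}_{H}\xi_{R}|$ is controlled by $R^{-2}$ times a fractional power of $\xi_{R}$. A H\"older inequality applied to the second inequality (pairing with the larger of $p,q$, which is $\ge 1$ since $p+q\ge 2$) gives a bound of the type $\int u^{s}\xi_{R}\le C R^{\,N/s'-2}\big(\int u^{s}\xi_{R}\big)^{0}\cdots$; more precisely one obtains $\int u^{s}\xi_{R}\le C R^{\,N/q'-2}\big(\int v^{q}\xi_{R}\big)^{1/q}$, while using $|x|^{-\alpha}\ast v^{p}\ge c R^{-\alpha}\int_{\mathcal{C}_{\Omega'}^{R,4R}}v^{p}$ on the support of $\xi_{R}$, the first inequality gives $c R^{-\alpha}\big(\int_{\mathcal{C}_{\Omega'}^{R,4R}}v^{p}\big)\int v^{q}\xi_{R}\le C R^{\,N/s'-2}\big(\int u^{s}\xi_{R}\big)^{1/s}$. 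Eliminating $\int u^{s}\xi_{R}$, interpolating between the $L^{p}$ and $L^{q}$ integrals of $v$ over the dyadic annulus (this is where $p+q\ge 2$ together with $s>1$ makes the resulting recursion in $R$ close), and invoking positivity of $v$ --- concretely the Phragm\'en--Lindel\"of lower bound $v(x)\ge c|x|^{\gamma_{*}}\phi(\omega)$ valid for any positive $\mathscr{L}_{H}$-supersolution, obtained by comparison in bounded annular cones --- one is led to an inequality $R^{k_{1}}\le C R^{k_{2}}$ holding for all large $R$, and a short computation shows $k_{1}>k_{2}$ precisely when $p\le(N-\alpha)/|\gamma_{*}|$ or $p+q<\frac{1}{s}\big(1+\frac{2}{|\gamma_{*}|}\big)+\frac{N-\alpha+2}{|\gamma_{*}|}$, the sought contradiction.

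For the existence in~(b)(ii), I would look for a supersolution of separated form $u(x)=A|x|^{-a}\phi(\omega)$, $v(x)=B|x|^{-b}\phi(\omega)$, with small amplitudes $A,B>0$ and exponents $0<a,b<|\gamma_{*}|$. By \eqref{eqbeta} and $-\Delta_{\omega}\phi=\lambda_{1}\phi$ one has $\mathscr{L}_{H}u=A c_{a}|x|^{-a-2}\phi$ and $\mathscr{L}_{H}v=B c_{b}|x|^{-b-2}\phi$ with $c_{a}=(\lambda_{1}-\mu)-a(a-N+2)>0$ (and likewise $c_{b}$) on that range of exponents. Estimating the convolution by $|x|^{-\alpha}\ast v^{p}\le C B^{p}|x|^{-\theta}$, where $\theta=\min\{bp+\alpha-N,\ \alpha\}\ge 0$ --- the convolution being finite exactly when $bp+\alpha>N$, i.e.\ $b>(N-\alpha)/p$, which is compatible with $b<|\gamma_{*}|$ thanks to $p>(N-\alpha)/|\gamma_{*}|$ --- the two inequalities of \eqref{mainsys} reduce, after using $\phi\le 1$ and $q,s>1$ to discard the factors $\phi^{q-1},\phi^{s-1}\le 1$, to the two linear conditions $a+2\le\theta+bq$ and $b+2\le as$, up to fixing $A,B$. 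Solvability of this system in $(a,b)$ under $0<a<|\gamma_{*}|$ and $(N-\alpha)/p<b<|\gamma_{*}|$ is an elementary linear-programming check which comes out exactly under the stated hypotheses: $s>1+\frac{2}{|\gamma_{*}|}$ makes the bound $a<|\gamma_{*}|$ automatic once $b+2=as$; $p>(N-\alpha)/|\gamma_{*}|$ opens a range for $b$; and, pushing $b\uparrow|\gamma_{*}|$, the remaining requirement $a+2\le\theta(b)+bq$ becomes $p+q>\frac{1}{s}\big(1+\frac{2}{|\gamma_{*}|}\big)+\frac{N-\alpha+2}{|\gamma_{*}|}$ when $\theta=bp+\alpha-N$ and $q>\frac{1}{s}\big(1+\frac{2}{|\gamma_{*}|}\big)+\frac{2-\alpha}{|\gamma_{*}|}$ when $\theta=\alpha$, both of which are guaranteed by the hypotheses $q>1+\max\{0,(2-\alpha)/|\gamma_{*}|\}$ and the stated lower bound on $p+q$. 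One then checks that $u,v$ belong to the admissible class and satisfy the finiteness requirements in the definition of a positive solution.

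The main obstacle is part~(b)(i): coupling the two test-function estimates so that the exponent $k_{2}$ is sharp. The nonlocal factor $|x|^{-\alpha}\ast v^{p}$ ties together distant dyadic scales, so the crude local lower bound used for it must be balanced against the $R^{-\alpha}$ it costs; and the $L^{p}$--$L^{q}$ interpolation for $v$ over the annulus (where the hypothesis $p+q\ge 2$ is used) must be tuned so that the critical relation $p+q=\frac{1}{s}\big(1+\frac{2}{|\gamma_{*}|}\big)+\frac{N-\alpha+2}{|\gamma_{*}|}$ comes out exactly, i.e.\ so that the nonexistence region of~(b)(i) is the complement, in the relevant range of parameters, of the existence region of~(b)(ii). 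Getting that bookkeeping right is the crux.
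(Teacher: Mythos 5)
Your overall architecture coincides with the paper's: part (a) by the Hardy/positivity argument applied to the first component; part (b)(i) by testing both inequalities of \eqref{mainsys} with $\phi_{\ve}\eta_R^{\lambda}$ (the $m=0$ case of Proposition \ref{pconea1}), bounding the convolution below by $cR^{-\alpha}\int_{\mathcal{C}_{\Omega}^{\rho,4R}}v^{p}$, eliminating one unknown via H\"older (this is exactly where $p+q\geq 2$ enters in the paper) and closing with the lower bound $v\geq c_{0}|x|^{\gamma_*}$ of Lemma \ref{lg}; part (b)(ii) by separated power supersolutions subject to the same linear constraints on the exponents that the paper imposes. However, there are two genuine gaps. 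First, in (b)(i) you claim that the single comparison $R^{k_1}\leq CR^{k_2}$ also yields nonexistence when $p\leq (N-\alpha)/|\gamma_*|$. It cannot: your lower bound for $|x|^{-\alpha}\ast v^{p}$ uses only the annulus $\mathcal{C}_{\Omega}^{\rho,4R}$ at cost $R^{-\alpha}$, so the resulting exponent inequality constrains only the combination $p+q$ (together with $s,\alpha,N,|\gamma_*|$), never $p$ alone. The paper disposes of that case by a different mechanism: $v\geq c_{0}|x|^{\gamma_*}$ in a subcone (Lemma \ref{lg}) combined with Lemma \ref{esm}(ii) gives $|x|^{-\alpha}\ast v^{p}\equiv+\infty$ whenever $p|\gamma_*|\leq N-\alpha$, contradicting the finiteness required in the definition of a solution of \eqref{mainsys}. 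You need this separate step; your plan as written does not produce it.

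Second, in (b)(ii) your ansatz $u=A|x|^{-a}\phi(\omega)$, $v=B|x|^{-b}\phi(\omega)$ needs the coefficient $\lambda_1-\mu-t(t+N-2)$ to be strictly positive at the chosen exponents, which is available only when $\mu<C_{H,\Omega}$. At the borderline $\mu=C_{H,\Omega}$ one has $\lambda_1-\mu-t(t+N-2)=-(t-\gamma_*)^{2}\leq 0$ for every $t$, so no pure power pair is a supersolution and your construction collapses precisely in that case. The paper covers it with the logarithmically corrected pair $u=v=\phi(\omega)|x|^{\gamma_*}\log^{\tau}(\sigma|x|)$, $0<\tau<1$, as in \eqref{uuu}, using $s>1+2/|\gamma_*|$ and the log-variants of Lemma \ref{esm}(iii); you must either add this case or restrict your statement. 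Two smaller points: your intermediate display $\int u^{s}\xi_R\leq CR^{N/q'-2}\bigl(\int v^{q}\xi_R\bigr)^{1/q}$ mismatches which inequality carries $u^{s}$ versus $v$ (the second inequality, once tested, controls $\int u^{s}$ by $R^{-2}\int v$, and H\"older is then applied in $v$, as in the paper), and in the convolution estimate you should perturb $b$ so that $bp\neq N$ to avoid the logarithmic borderline of Lemma \ref{esm}(iii), as the paper does.
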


Theorem \ref{thsysinq} yields a new critical curve given by 
$$
p+q=\frac{1}{s}\Big(1+\frac{2}{|\gamma_{*}|}\Big)+\frac{N-\alpha+2}{|\gamma_{*}|},
$$ 
which defines the regions of existence and nonexistence of a positive solution to \eqref{mainsys}. We may regard the quantity $p+q$ as the global exponent of $v$ in the first inequality of \eqref{mainsys}.

\medskip

The remaining of this work is organised as follows. In Section 2 we collect some preliminary results consisting of various integral estimates. In Section 3 we derive an a priori estimate for positive solutions of \eqref{eq0}. This is one of the main tools in the proof of Theorem \ref{thm1} which is presented in Section 4. Section 5 is devoted to the proof of Theorem \ref{thm2} while the proof of Theorem \ref{thsysinq} is given in Section 6. 
Throughout this article by $c_0$, $c$, $C$, $C_1$, $C_2$, $\ldots$ we denote positive  constants whose value may change on every occurrence.

\section{Preliminary Results}
In this section we derive some preliminary results for our approach. 
\begin{lemma}\label{hrd} If $u\in C^2(\mathcal{C}_\Omega^\rho)$ is positive and satisfies $\mathscr{L}_H u\geq 0$ in $\mathcal{C}_\Omega^\rho$, then $\mu \leq  C_{H, \Omega}$.
\end{lemma}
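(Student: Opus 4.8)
The statement asserts that if a positive supersolution of $\mathscr{L}_H$ exists on the cone, then the Hardy constant condition $\mu \leq C_{H,\Omega}$ is forced. The natural strategy is a ground-state substitution combined with the optimal Hardy inequality \eqref{hardy1}. Given a positive $u \in C^2(\mathcal{C}_\Omega^\rho)$ with $-\Delta u \geq \frac{\mu}{|x|^2} u$, I would write, for an arbitrary test function $\varphi \in C^\infty_c(\mathcal{C}_\Omega^\rho)$, the function $\psi = \varphi^2 / u$ (which is admissible since $u>0$ is $C^2$ on the open cone and $\varphi$ has compact support away from $\partial\mathcal{C}_\Omega^\rho$). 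Multiplying the differential inequality by $\psi$ and integrating by parts gives
\begin{equation*}
\mu \int_{\mathcal{C}_\Omega^\rho} \frac{\varphi^2}{|x|^2}\,dx \leq \int_{\mathcal{C}_\Omega^\rho} \nabla u \cdot \nabla\Big(\frac{\varphi^2}{u}\Big)\,dx = \int_{\mathcal{C}_\Omega^\rho} \Big( 2\frac{\varphi}{u}\nabla u\cdot\nabla\varphi - \frac{\varphi^2}{u^2}|\nabla u|^2\Big)\,dx.
\end{equation*}
The integrand on the right is pointwise bounded above by $|\nabla\varphi|^2$, by the elementary inequality $2ab - b^2 \leq a^2$ applied with $a = |\nabla\varphi|$ and $b = \varphi|\nabla u|/u$ (equivalently, completing the square: $|\nabla\varphi|^2 - |\nabla\varphi - \frac{\varphi}{u}\nabla u|^2 = 2\frac{\varphi}{u}\nabla u\cdot\nabla\varphi - \frac{\varphi^2}{u^2}|\nabla u|^2$). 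Hence $\mu \int \varphi^2/|x|^2 \leq \int |\nabla\varphi|^2$ for all $\varphi \in C^\infty_c(\mathcal{C}_\Omega^\rho)$.

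It then remains to conclude $\mu \leq C_{H,\Omega}$ from this inequality together with the optimality of the constant $C_{H,\Omega}$ in \eqref{hardy1}. If $\mu > C_{H,\Omega}$, then optimality of $C_{H,\Omega}$ means there is a sequence $\varphi_n \in C^\infty_c(\mathcal{C}_\Omega^\rho)$ with $\int |\nabla\varphi_n|^2 < (\mu - \epsilon)\int \varphi_n^2/|x|^2$ for some small $\epsilon>0$ and $\int \varphi_n^2/|x|^2 > 0$, contradicting the displayed bound. So $\mu \leq C_{H,\Omega}$, as claimed. One subtlety to handle carefully is the justification of the integration by parts: since $\varphi$ is compactly supported in the open cone and $u$ is $C^2$ and strictly positive on a neighbourhood of $\operatorname{supp}\varphi$, the quotient $\varphi^2/u$ is a legitimate $C^1_c$ (indeed $C^2_c$) test function, so no boundary terms arise and the manipulation is rigorous.

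The main obstacle, such as it is, is purely at the level of bookkeeping: ensuring the test-function class matches so that \eqref{hardy1} and its optimality can be invoked verbatim, and confirming the pointwise algebraic estimate collapses the right-hand side to $\int|\nabla\varphi|^2$ with no leftover terms. There is no analytic difficulty — everything reduces to a one-line completion of the square — but one should state the argument so that the conclusion is contrapositive-clean: either directly show $\mu \int \varphi^2/|x|^2 \leq \int|\nabla\varphi|^2$ contradicts $\mu > C_{H,\Omega}$ via optimality, or equivalently note that the inequality just derived says $C_{H,\Omega}$ is not the best constant unless $\mu \leq C_{H,\Omega}$. I would write it in the contrapositive form for brevity.
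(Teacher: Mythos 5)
Your proposal is correct and follows essentially the same argument as the paper: testing $\mathscr{L}_H u\geq 0$ against $\varphi^2/u$, integrating by parts, bounding the resulting integrand by $|\nabla\varphi|^2$ via completing the square, and invoking the optimality of $C_{H,\Omega}$ in \eqref{hardy1}. No gaps to report.
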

\begin{proof} Let $\varphi \in C_c^{\infty}(\mathcal{C}_{\Omega}^\rho)$. Multiply the inequality $\mathscr{L}_H u\geq 0$ by $\frac{\varphi^2}{u}$ and integrate over $\mathcal{C}_\Omega^\rho$. We deduce
\[
-\int_{\mathcal{C}_{\Omega}^{\rho}}\frac{\varphi^2}{u} \Delta u dx \geq \mu \int_{\mathcal{C}_{\Omega}^{\rho}} \frac{ \varphi^2}{\abs{x}^2} dx.
\]
By Green's formula, one has
\begin{equation*}
\begin{split}
-\int_{\mathcal{C}_{\Omega}^{\rho}}\frac{\varphi^2}{u} \Delta u dx &= \int_{\mathcal{C}_{\Omega}^{\rho}} \nabla u \nabla\Big(\frac{\varphi^2}{u}\Big) dx \\
&= \int_{\mathcal{C}_{\Omega}^{\rho}}  \left(\frac{2\varphi}{u}\nabla 
\varphi \cdot \nabla u- \frac{\varphi^2}{u^2} |\nabla u|^2\right) dx\\
&\leq \int_{\mathcal{C}_{\Omega}^{\rho}} \abs{\nabla \varphi}^2 dx.
\end{split}    
\end{equation*}
It follows that
$$
\int_{\mathcal{C}_{\Omega}^{\rho}} \abs{\nabla \varphi}^2 dx \geq -\int_{\mathcal{C}_{\Omega}^{\rho}}\frac{\varphi^2}{u} \Delta u dx \geq \mu \int_{\mathcal{C}_{\Omega}^{\rho}} \frac{\varphi^2}{\abs{x}^2} dx.
$$
Furthermore, by Hardy's inequality \eqref{hardy1} and the optimality of the constant $C_{H, \Omega}$  we have 
$\mu \leq C_{H, \Omega}$.
\end{proof}

\noindent In the result below we derive various integral estimates on cone domains involving the kernel $|x|^{-\alpha}$. 

\begin{lemma}\label{esm} Let $\Omega\subset \SS$ be a subdomain, $\rho>0$ and $0\leq \alpha<N$. 

\begin{enumerate}
\item[\rm (i)]
If $f\in L^1_{loc}(\mathcal{C}_\Omega^\rho)$ is positive, then there exists $C>0$ such that
$$ \int_{\mathcal{C}_\Omega^\rho}\frac{f(y)}{|x-y|^{\alpha}} dy \geq C|x|^{-\alpha}\quad\mbox{ for all }x\in \mathcal{C}_\Omega^{2\rho}.$$

\item[\rm (ii)]
If $f:\mathcal{C}_\Omega^\rho\to [0, \infty)$ is measurable and there exists a subdomain $\Omega_0\subset \subset \Omega$ and $c, \beta>0$ such that $f(x)\geq c  |x|^{-\beta}$ in $\mathcal{C}_{\Omega_0}^\rho$, then
$$  
\left\{
\begin{aligned}
& \int_{\mathcal{C}_\Omega^\rho}\frac{f(y)}{|x-y|^{\alpha}} dy =+\infty && \quad\mbox{ if } \; N\geq \alpha+\beta \\[0.05in]
& \int_{\mathcal{C}_\Omega^\rho}\frac{f(y)}{|x-y|^{\alpha}} dy \geq C|x|^{N-\alpha-\beta} && \quad\mbox{ if } \; N< \alpha+\beta
\end{aligned}
\right. \quad\mbox{ in }\; \mathcal{C}_\Omega^{2\rho},
$$
where $C>0$ is a constant. 

\item[\rm (iii)]
If $f:\mathcal{C}_\Omega^\rho \to [0, \infty)$ is measurable and satisfies 
$$
\displaystyle f(x)\leq  c |x|^{-\beta}\log^\tau(1+|x|)\quad\mbox{  in }\; \mathcal{C}_\Omega^\rho,
$$ 
for some $c>0$, $\tau\geq 0$ and $\beta>N-\alpha>0$, then there exists a positive constant $C>0$ such that 
$$ 
\int_{\mathcal{C}_\Omega}\frac{f(y)}{|x-y|^{\alpha}} dy \leq 
\left\{
\begin{aligned}
&C |x|^{N-\alpha-\beta}\log^\tau(1+|x|) &&\quad\mbox{ if }\; \beta<N\\
&C|x|^{-\alpha}\log^{1+\tau} (1+|x|)   &&\quad\mbox{ if }\; \beta=N\\
&C|x|^{-\alpha} \log^\tau(1+|x|) &&\quad\mbox{ if }\; \beta>N
\end{aligned}
\right.
\quad\mbox{ in }\; \mathcal{C}_\Omega^{2\rho}.
$$
\end{enumerate}
\end{lemma}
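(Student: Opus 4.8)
The plan is to prove all three parts by comparing the distance $|x-y|$ with $|x|$ or with $|y|$ on a suitably chosen subregion of $\mathcal{C}_\Omega^\rho$, and then reducing what survives to an elementary one-dimensional radial integral; only part (iii) requires genuine care. For (i), I would first fix a ball $B=B(y_0,r)\subset\subset\mathcal{C}_\Omega^\rho$, so that $m:=\int_B f\,dy>0$ since $f$ is positive and locally integrable; for $x\in\mathcal{C}_\Omega^{2\rho}$ and $y\in B$ one has $|x-y|\le|x|+|y_0|+r\le c_1|x|$ (using $|x|>2\rho$), and keeping only the integral over $B$ gives $\int_{\mathcal{C}_\Omega^\rho}|x-y|^{-\alpha}f(y)\,dy\ge c_1^{-\alpha}m\,|x|^{-\alpha}$. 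For (ii), I would restrict the integral to the sub-cone $\mathcal{C}_{\Omega_0}^\rho$, where $f(y)\ge c|y|^{-\beta}$. When $N\ge\alpha+\beta$, integrating further only over $\{y\in\mathcal{C}_{\Omega_0}:|y|>2|x|\}$, where $|x-y|\le\tfrac32|y|$, reduces the problem to $|\Omega_0|\int_{2|x|}^\infty r^{N-1-\alpha-\beta}\,dr=+\infty$. For the lower bound $C|x|^{N-\alpha-\beta}$ (substantive when $N<\alpha+\beta$, but valid in general), I would instead keep only the annular sub-cone $\{y\in\mathcal{C}_{\Omega_0}:|x|<|y|<2|x|\}$: there $|y|^{-\beta}\ge(2|x|)^{-\beta}$, $|x-y|\le3|x|$, and the set has Lebesgue measure $c_0|x|^N$ (the solid angle $|\Omega_0|$ being fixed and positive), which gives the claim.

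The bulk of the work is part (iii). Here the key step is a four-way decomposition of the domain of integration in $I(x):=\int_{\mathcal{C}_\Omega^\rho}|x-y|^{-\alpha}f(y)\,dy$. Fixing $x\in\mathcal{C}_\Omega^{2\rho}$ (so $|x|/2>\rho$), I would split $\mathcal{C}_\Omega^\rho$ into the four pieces $A_1=\{|y|<|x|/2\}$, $A_2=\{|x|/2\le|y|\le2|x|,\ |x-y|\ge|x|/2\}$, $A_3=\{|x-y|<|x|/2\}$ and $A_4=\{|y|>2|x|\}$ (each intersected with $\mathcal{C}_\Omega^\rho$), which cover $\mathcal{C}_\Omega^\rho$. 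On $A_1$ one has $|x-y|>|x|/2$, so the corresponding part of $I(x)$ is at most $(|x|/2)^{-\alpha}\int_{A_1}f\,dy\le C|x|^{-\alpha}\int_\rho^{|x|/2}r^{N-1-\beta}\log^\tau(1+r)\,dr$. On $A_2$, $|y|$ is comparable to $|x|$, $|x-y|\ge|x|/2$, and $|A_2|\le C|x|^N$, so that part is $\le C|x|^{N-\alpha-\beta}\log^\tau(1+|x|)$. On $A_3$, $|y|$ is again comparable to $|x|$ (so $f(y)\le C|x|^{-\beta}\log^\tau(1+|x|)$) while $\int_{\{|x-y|<|x|/2\}}|x-y|^{-\alpha}\,dy\le C|x|^{N-\alpha}$ (this is where $\alpha<N$ is used), yielding once more $\le C|x|^{N-\alpha-\beta}\log^\tau(1+|x|)$. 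On $A_4$, $|x-y|\ge|y|/2$, so that part is $\le C\int_{2|x|}^\infty r^{N-1-\alpha-\beta}\log^\tau(1+r)\,dr$, which converges since $\beta>N-\alpha$ and is $\le C|x|^{N-\alpha-\beta}\log^\tau(1+|x|)$.

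Finally, I would compute the radial integrals, using that $\log(1+r)$ is slowly varying: for $s>\rho$,
\[
\int_\rho^{s}r^{N-1-\beta}\log^\tau(1+r)\,dr\lesssim
\begin{cases}
s^{N-\beta}\log^\tau(1+s)&\text{if }\beta<N,\\
\log^{1+\tau}(1+s)&\text{if }\beta=N,\\
1&\text{if }\beta>N,
\end{cases}
\]
together with $\int_{2|x|}^\infty r^{N-1-\alpha-\beta}\log^\tau(1+r)\,dr\lesssim|x|^{N-\alpha-\beta}\log^\tau(1+|x|)$. With $s=|x|/2$, the $A_1$-contribution is $\le C|x|^{N-\alpha-\beta}\log^\tau(1+|x|)$ if $\beta<N$, $\le C|x|^{-\alpha}\log^{1+\tau}(1+|x|)$ if $\beta=N$, and $\le C|x|^{-\alpha}\le C|x|^{-\alpha}\log^\tau(1+|x|)$ if $\beta>N$ (on $\mathcal{C}_\Omega^{2\rho}$ the factor $\log^\tau(1+|x|)$ is bounded below by a positive constant). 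Moreover, when $\beta\ge N$ one has $|x|^{N-\alpha-\beta}=|x|^{-\alpha}|x|^{N-\beta}\le(2\rho)^{N-\beta}|x|^{-\alpha}$ for $x\in\mathcal{C}_\Omega^{2\rho}$, so the $A_2,A_3,A_4$-terms get absorbed into $C|x|^{-\alpha}\log^{1+\tau}(1+|x|)$ when $\beta=N$ and into $C|x|^{-\alpha}\log^\tau(1+|x|)$ when $\beta>N$, whereas for $\beta<N$ all four terms already have the form $C|x|^{N-\alpha-\beta}\log^\tau(1+|x|)$. Adding the four contributions yields the three stated bounds. The one genuinely fiddly point --- and the step I expect to be the main obstacle --- is precisely this last bookkeeping: tracking the exact power and logarithmic exponent produced by $A_1$ in each of the cases $\beta<N$, $\beta=N$, $\beta>N$, and correctly absorbing the near-diagonal terms from $A_2,A_3,A_4$ (all of order $|x|^{N-\alpha-\beta}\log^\tau(1+|x|)$) into the asserted estimates. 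Everything else is a routine comparison of $|x-y|$ with $|x|$ or $|y|$.
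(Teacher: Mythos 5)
Your proof is correct and follows essentially the same strategy as the paper: restrict the convolution to subregions where $|x-y|$ is comparable to $|x|$ or $|y|$, then reduce to one-dimensional radial integrals whose three cases $\beta<N$, $\beta=N$, $\beta>N$ produce exactly the stated bounds. The only differences are cosmetic (a ball instead of an annular shell in (i), the annulus $|x|<|y|<2|x|$ instead of the tail region for the lower bound in (ii), and splitting the near-diagonal set $A_3$ off from the comparable region in (iii), which the paper handles in a single middle piece via $\int_{|x-y|\le 3|x|}|x-y|^{-\alpha}dy\le C|x|^{N-\alpha}$).
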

\begin{proof}
(i) For any $x\in \mathcal{C}_\Omega^{2\rho}$ and $y\in \mathcal{C}_\Omega^{3\rho/2,2\rho}$ we have
$|x-y|\leq |x|+|y|\leq 2|x|$. Thus,  
$$ |x|^{-\alpha}\ast f \geq \int_{\mathcal{C}_\Omega^{3\rho/2,2\rho} }\frac{f(y)}{|x-y|^{\alpha}}dy \geq C\int_{\mathcal{C}_\Omega^{3\rho/2,2\rho} }\frac{f(y)}{(2|x|)^{\alpha}}dy =C |x|^{-\alpha}. $$ 

(ii) We use the spherical coordinates: each $x\in \mathcal{C}_{\Omega_0}$ can be written as 
\begin{equation}\label{sphc}
\begin{cases}
x_1=r\sin\theta_1 \sin\theta_2\cdots \sin\theta_{N-2}\sin\theta_{N-1},\\[0.1in]
\displaystyle x_k=r \cos\theta_{k-1}\prod_{j=k}^{N-1}\sin\theta_{j}, \; 2\leq k\leq N-2,\\[0.1in]
x_{N-1}=r\cos\theta_{N-1},
\end{cases}
\end{equation}
where $r> 0$. Because $\Omega_0$ is a  subdomain of $\SS$,  each $\theta_k$ ($1\leq k\leq N-1$) belongs to a nondegenerate interval $I_k\subset \R$. 

\noindent Let $|y|\geq 2|x|\geq 2\rho$. Then $|x-y| \leq |x|+|y| \leq 3|y|/2$ so that
$$
\begin{aligned}
|x|^{-\alpha}\ast f & \geq c\int_{\mathcal{C}_{\Omega_0}^{2|x|}}\frac{|y|^{-\beta}}{|x-y|^{\alpha}}dy  \geq C\int_{\mathcal{C}_{\Omega_0}^{2 |x|} }  |y|^{-\alpha-\beta} dy\\[0.1in]
&= C\int_{2|x|}^\infty r^{N-1-\alpha-\beta} \, dr=
\begin{cases}
+\infty &\mbox{ if } N\geq \alpha+\beta,\\[0.05in]
C|x|^{N-\alpha-\beta} &\mbox{ if } N< \alpha+\beta,
\end{cases}
\end{aligned}
$$ and the conclusion follows.

(iii) Let $x\in \mathcal{C}_{\Omega}^{2\rho}$. We split the convolution integral as follow:
\begin{equation}\label{esta1}
\int_{\mathcal{C}_{\Omega}^{\rho}} \frac{f(y)}{|x-y|^{\alpha}} dy = \left\{\int_{\mathcal{C}_{\Omega}^{2\abs{x}}} + \int_{\mathcal{C}_{\Omega}^{\abs{x}/2, 2\abs{x}}} + \int_{\mathcal{C}_{\Omega}^{\rho, \abs{x}/2}} \right\} \frac{f(y)}{|x-y|^{\alpha}} dy.
\end{equation}
To estimate the first integral in the right-hand side of \eqref{esta1} let $y\in \mathcal{C}_{\Omega}^{2|x|}$. Then $\abs{x - y} \geq |y|-|x|\geq \frac{\abs{y}}{2}$ so that 

\begin{equation}\label{esta2}
\begin{aligned}
\int_{\mathcal{C}_{\Omega}^{2|x|}} \frac{f(y)}{|x-y|^{\alpha}} dy &\leq C \int_{\mathcal{C}_{\Omega}^{\rho}} \frac{|y|^{-\beta}\log^\tau(1+|y|)}{|x-y|^{\alpha}} dy\\[0.05in]
&\leq C \int_{\mathcal{C}_{\Omega}^{\rho}} \frac{|y|^{-\beta}\log^\tau(1+|y|)}{(|y|/2)^{\alpha}} dy\\[0.05in]
&\leq C\int_{2\abs{x}}^{\infty} t^{N - \alpha - \beta - 1}\log^{ \tau}(1 + t)dt\\[0.05in]
&\leq C 
\abs{x}^{N - \alpha - \beta}\log^{ \tau}(1 + \abs{x}).
\end{aligned}
\end{equation}
For $y\in \mathcal{C}_\Omega^{\abs{x}/2 ,  2\abs{x}}$ we have $\abs{x - y} \leq |x|+|y|\leq 3\abs{x}$. Then
\begin{equation}\label{esta3}
\begin{aligned}
\int_{\mathcal{C}_{\Omega}^{\abs{x}/2, 2\abs{x}}} \frac{f(y)}{|x-y|^{\alpha}} dy &\leq C\abs{x}^{-\beta}\log^{\tau}(1 + \abs{x})\int_{\abs{x - y} \leq 3\abs{x}} \frac{dy}{\abs{x- y}^{\alpha} } \\[0.05in]
&\leq C\abs{x}^{N - \alpha -\beta}\log^{\tau}(1 + \abs{x}).
\end{aligned}
\end{equation}
Finally, for $y\in \mathcal{C}_\Omega^{\rho, \abs{x}/2}$ we have $\frac{\abs{x}}{2} \leq \abs{x - y} \leq \frac{3\abs{x}}{2}$ and
\[\begin{split}
\int_{\mathcal{C}_{\Omega}^{\rho,\abs{x}/2}} \frac{f(y)}{|x-y|^{\alpha}} dy &\leq C\abs{x}^{-\alpha} \int_{\mathcal{C}_{\Omega}^{\rho,\abs{x}/2}} \abs{y}^{-\beta}\log^{\tau}(1 + \abs{y}) dy\\[0.05in]
&= C\abs{x}^{-\alpha}  \int_{\rho}^{\abs{x}/2}t^{N - \beta - 1}\log^{\tau}(1 + t)dt.
\end{split}
\]
It follows that
\begin{equation}\label{esta5}
\int_{\mathcal{C}_{\Omega}^{\rho,\abs{x}/2}} \frac{f(y)}{|x-y|^{\alpha}} dy \leq C\left\{
\begin{aligned}
&\abs{x}^{N - \alpha - \beta}\log^{\tau}(1 + \abs{x}) &&\text{ if } \beta < N,\\
&\abs{x}^{- \alpha}\log^{1 + \tau}(1 + \abs{x}) &&\text{ if } \beta = N,\\
&\abs{x}^{ - \alpha}\log^{\tau}(1 + \abs{x}) && \text{ if } \beta > N.
\end{aligned}\right.
\end{equation}
Now, we combine \eqref{esta1}-\eqref{esta5} to achieve the conclusion.
\end{proof}

\begin{lemma}\label{lg} {\rm (see \cite[Theorem 4.2]{LLM06})}
Suppose $\mu\leq C_{H, \Omega}$ and $u\in C^{2}(\mathcal{C}^\rho_\Omega)$ is such that 
$$
\mathscr{L}_H u\geq 0\quad\mbox{  and } \quad u>0\quad\mbox{  in }\; \mathcal{C}^\rho_\Omega.
$$  
Then, for any subdomain $\Omega_0\subset \subset \Omega$ there exists $c_0> 0$ such that
\begin{equation}\label{gren}
u(x)\geq c_0  |x|^{\gamma_*} 
\qquad\mbox{ in }\; \mathcal{C}^{2\rho}_{\Omega_0}.
\end{equation}
\end{lemma}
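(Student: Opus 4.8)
\emph{Strategy.} I would in fact prove the stronger pointwise estimate $u(x)\ge c\,|x|^{\gamma_*}\phi(\omega)$ in $\mathcal{C}_\Omega^{2\rho}$, by comparing $u$ with an explicit family of $\mathscr{L}_H$–harmonic barriers built from the separated solution $w_0(x):=|x|^{\gamma_*}\phi(\omega)$. The elementary computation $\mathscr{L}_H(r^a\phi)=-r^{a-2}\phi\,[a(a+N-2)+\mu-\lambda_1]$ shows, by the definition \eqref{eqbeta} of $\gamma_*,\gamma^*$, that both $r^{\gamma_*}\phi$ and $r^{\gamma^*}\phi$ are positive $\mathscr{L}_H$–harmonic functions on $\mathcal{C}_\Omega$ that vanish on the lateral part $\{\omega\in\partial\Omega\}$. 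For $R>2\rho$ put $w_R:=r^{\gamma_*}\phi-R^{\,\gamma_*-\gamma^*}r^{\gamma^*}\phi$ (with the obvious modification $w_R:=r^{\gamma_*}\phi\,(1-\log r/\log R)$ when $\gamma_*=\gamma^*$, i.e. $\mu=C_{H,\Omega}$). Then $\mathscr{L}_H w_R=0$ in $\mathcal{C}_\Omega^{2\rho,R}$, $0<w_R\le w_0$ there, $w_R=0$ on $\{r=R\}\cup\{\omega\in\partial\Omega\}$, and $w_R(2\rho,\cdot)\le C(2\rho)^{\gamma_*}\phi$ on $\{r=2\rho\}\times\Omega$.

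\emph{Step 1: a lateral lower bound for $u$.} The key point is that there is $\epsilon_0>0$ with $u(2\rho,\omega)\ge\epsilon_0\phi(\omega)$ for all $\omega\in\Omega$, i.e. $u$ does not vanish faster than the eigenfunction at the lateral boundary. Fix an interior point $P$ with $r(P)=7\rho/4$ and $\omega(P)$ deep inside $\Omega$, and a ball $B=B_\delta(P)$ with $\delta$ so small that $\overline B\subset\{5\rho/4<r<2\rho\}$ stays away from $\{\omega\in\partial\Omega\}$; since $u>0$ is continuous on $\overline B$, $m_0:=\min_{\overline B}u>0$. Because $\mathscr{L}_H$ possesses the positive solution $w_0$, the comparison principle holds for $\mathscr{L}_H$ on every bounded subdomain of $\mathcal{C}_\Omega^\rho$ (apply the maximum principle to $g/w_0$, which solves a divergence–form equation with no zeroth order term). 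Let $h_R$ be the $\mathscr{L}_H$–harmonic function on $\mathcal{C}_\Omega^{5\rho/4,R}\setminus\overline B$ equal to $m_0$ on $\partial B$ and to $0$ on the rest of the boundary; then $u\ge h_R$ on $\partial(\mathcal{C}_\Omega^{5\rho/4,R}\setminus\overline B)$ (indeed $u\ge m_0=h_R$ on $\partial B$ and $u\ge0=h_R$ elsewhere), hence $u\ge h_R$ inside. The $h_R$ increase with $R$ and stay below a fixed multiple of $w_0$ (compare on the boundary), so $h_R\uparrow h_\infty$, a positive $\mathscr{L}_H$–harmonic function on $\mathcal{C}_\Omega^{5\rho/4}\setminus\overline B$ vanishing on $\{\omega\in\partial\Omega\}$, with $u\ge h_\infty$ there. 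Near the sphere $\{r=2\rho\}$ both $h_\infty$ and $w_0$ are positive $\mathscr{L}_H$–solutions vanishing on $\{\omega\in\partial\Omega\}$, so the boundary Harnack principle for $\mathscr{L}_H$ (uniformly elliptic with a smooth bounded potential away from the origin) together with the trivial interior comparison gives $h_\infty\ge c\,w_0$ on $\{r=2\rho\}\times\Omega$, i.e. $u(2\rho,\cdot)\ge c(2\rho)^{\gamma_*}\phi=:\epsilon_0\phi$.

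\emph{Step 2: comparison and conclusion.} Set $\epsilon:=c_1\epsilon_0(2\rho)^{-\gamma_*}$ with $c_1$ absorbing the constant $C$ from Step~0. On $\{r=2\rho\}\times\Omega$ one has $\epsilon\,w_R\le\epsilon_0\phi\le u$, while $\epsilon\,w_R=0\le u$ on $\{r=R\}\times\Omega$ and on $\{\omega\in\partial\Omega\}$. Since $\mathscr{L}_H(u-\epsilon w_R)\ge0$ in $\mathcal{C}_\Omega^{2\rho,R}$ and $u-\epsilon w_R\ge0$ on $\partial\mathcal{C}_\Omega^{2\rho,R}$, the comparison principle gives $u\ge\epsilon\,w_R$ in $\mathcal{C}_\Omega^{2\rho,R}$. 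Letting $R\to\infty$ (so $R^{\gamma_*-\gamma^*}\to0$, resp. $\log r/\log R\to0$ for fixed $r$) we obtain $u(x)\ge\epsilon\,r^{\gamma_*}\phi(\omega)$ throughout $\mathcal{C}_\Omega^{2\rho}$; restricting to $\omega\in\overline{\Omega_0}$ and using $\min_{\overline{\Omega_0}}\phi>0$ yields \eqref{gren} with $c_0=\epsilon\min_{\overline{\Omega_0}}\phi$.

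\emph{Main obstacle.} The substance of the proof is Step~1: showing $u\gtrsim\phi$ along a fixed sphere near the lateral boundary. This rests on three standard but nontrivial facts for $\mathscr{L}_H$ — the comparison principle on bounded cone–segments (available precisely once the positive solution $w_0$ is at hand, which is where $\mu\le C_{H,\Omega}$ enters), the construction of the minimal positive $\mathscr{L}_H$–harmonic function $h_\infty$, and the boundary Harnack principle on (Lipschitz) domains for uniformly elliptic operators — assembled exactly as in \cite[Theorem~4.2]{LLM06}. \emph{(An alternative route makes the origin of $\gamma_*$ transparent: the Emden–Fowler change of variables $s=\log r$, $v=r^{(N-2)/2}u$ turns $\mathscr{L}_H u\ge0$ into $-v_{ss}-\Delta_\omega v+(\tfrac{(N-2)^2}{4}-\mu)v\ge0$ on $(\log\rho,\infty)\times\Omega$ with $v>0$, and the target into $v\gtrsim e^{(\gamma_*+(N-2)/2)s}$; projecting onto $\phi$ gives $y(s)=\int_\Omega v(s,\cdot)\phi\,d\omega$ with $y''\le(C_{H,\Omega}-\mu)y$, whose positive supersolutions are bounded below by the subdominant exponential $e^{(\gamma_*+(N-2)/2)s}$ — although passing from this averaged bound back to a pointwise one again requires the boundary behaviour of $u$.)}
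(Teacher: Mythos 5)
The paper does not prove this lemma at all: it is quoted verbatim from \cite[Theorem 4.2]{LLM06}, so there is no in-paper argument to compare against, and what you have written is a reconstruction of the cited result. Your architecture is the standard one and most of it is sound: the computation $\mathscr{L}_H(r^a\phi)=r^{a-2}\phi\,[\lambda_1-\mu-a(a+N-2)]$ is right, the two-parameter barrier $w_R=r^{\gamma_*}\phi-R^{\gamma_*-\gamma^*}r^{\gamma^*}\phi$ (with the logarithmic correction when $\gamma_*=\gamma^*$) is the correct comparison function, and Step 2 (comparison on $\mathcal{C}_\Omega^{2\rho,R}$ using the generalized maximum principle available once a positive supersolution exists, then $R\to\infty$) is fine, modulo the small points that $0<w_R\le w_0$ can fail for $r<1$ in the critical case (harmless, since you only use an upper bound at $r=2\rho$) and that $u$ is not assumed continuous up to $\partial\mathcal{C}_\Omega^\rho$, so boundary inequalities should be phrased via $\liminf$/Perron solutions.

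The genuine gap is Step 1. You need the uniform lateral bound $u(2\rho,\cdot)\ge\epsilon_0\phi$ on all of $\Omega$, and you get it from the boundary Harnack principle applied to $h_\infty$ and $w_0$ near the lateral boundary. But the paper assumes only that $\Omega$ is a connected, relatively open subset of $S^{N-1}$ -- no Lipschitz, NTA or uniform-domain regularity of $\partial\Omega$ -- and the boundary Harnack principle (as well as your implicit claim that $h_\infty$ and $\phi$ vanish continuously and comparably at $\partial\Omega$) can fail for such cross-sections; indeed your intermediate assertion $u\gtrsim r^{\gamma_*}\phi(\omega)$ up to the lateral boundary is stronger than the lemma and is not available in this generality, which is precisely why the lemma is stated only on subcones $\mathcal{C}^{2\rho}_{\Omega_0}$. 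Nor can the gap be patched by running your comparison inside a subcone $\mathcal{C}_{\Omega'}$ with $\Omega_0\subset\subset\Omega'\subset\subset\Omega$: the natural barrier there is built from the eigenpair of $\Omega'$, whose negative exponent $\gamma_*(\Omega')$ is strictly smaller than $\gamma_*(\Omega)$, so that route only yields the non-sharp bound $u\ge c\,|x|^{\gamma_*(\Omega')}$. Getting the sharp exponent $\gamma_*$ without boundary regularity is exactly the nontrivial content of \cite[Theorem 4.2]{LLM06}, so as written your proof either silently strengthens the hypotheses on $\Omega$ or leaves its key step unproved; your final averaged-ODE remark has the same difficulty (passing from the $\phi$-average back to a pointwise interior bound again needs control of $u$ near $\partial\Omega$), as you yourself note.
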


\begin{lemma}{\rm (see \cite[Theorem 1.2]{LLM06}) }\label{pap}
Suppose $\mu\leq C_{H, \Omega}$, $q> 1$ and $\beta\in \R$. Then, the inequality
$$
\mathscr{L}_H u\geq |x|^\beta u^q\quad\mbox{ in }\; \mathcal{C}_\Omega^{\rho},
$$
has positive solutions if and only if $\displaystyle q>1+\frac{\beta+2}{|\gamma_*|}$.
\end{lemma}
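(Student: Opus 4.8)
The plan is to prove the two implications separately, both through fairly explicit objects built from the separated functions $r^{\gamma}\phi(\omega)$; the result itself is \cite[Theorem 1.2]{LLM06}.

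\emph{Existence direction.} For the sufficiency of \eqref{eqbeta}-type conditions I would simply exhibit a supersolution of the form $u(r,\omega)=c\,r^{\gamma}\phi(\omega)$, with $c>0$ small and $\gamma\in(\gamma_{*},\gamma^{*})$ to be chosen. Writing $-\Delta=-\partial_{rr}-\tfrac{N-1}{r}\partial_{r}-\tfrac1{r^{2}}\Delta_{\omega}$ and using $\Delta_{\omega}\phi=-\lambda_{1}\phi$, one computes
\[
\mathscr{L}_{H}u=c\,\kappa_{\gamma}\,r^{\gamma-2}\phi,\qquad \kappa_{\gamma}:=\lambda_{1}-\mu-\gamma(\gamma+N-2),\qquad |x|^{\beta}u^{q}=c^{q}\,r^{\beta+q\gamma}\,\phi^{q},
\]
and by \eqref{eqbeta} the coefficient $\kappa_{\gamma}$ is positive precisely when $\gamma$ lies strictly between the two roots $\gamma_{*}<\gamma^{*}$. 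Since $0<\phi\le1$ and $q>1$ give $\phi^{q}\le\phi$, it is enough to pick $\gamma\in(\gamma_{*},\gamma^{*})$ with $\beta+q\gamma\le\gamma-2$ — so that $r^{\beta+q\gamma}\le\rho^{\,\beta+q\gamma-\gamma+2}\,r^{\gamma-2}$ for $r>\rho$ — and then to take $c$ so small that $c^{q-1}\rho^{\,\beta+q\gamma-\gamma+2}\le\kappa_{\gamma}$. The constraint $\beta+q\gamma\le\gamma-2$ reads $\gamma\le\frac{\beta+2}{1-q}$, and a $\gamma\in(\gamma_{*},\gamma^{*})$ satisfying it exists if and only if $\frac{\beta+2}{1-q}>\gamma_{*}$, i.e. $|\gamma_{*}|(q-1)>\beta+2$, i.e. $q>1+\frac{\beta+2}{|\gamma_{*}|}$; this is exactly the stated condition. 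With such a choice, $u=c\,r^{\gamma}\phi\in C^{2}(\mathcal{C}_{\Omega}^{\rho})\cap C(\overline{\mathcal{C}_{\Omega}^{\rho}})$ is a positive solution. (In the degenerate case $\mu=C_{H,\Omega}$, where $\gamma_{*}=\gamma^{*}$, one would instead use a solution with a logarithmic factor, as in \cite{LLM06}.)

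\emph{Nonexistence direction.} Assuming $q\le1+\frac{\beta+2}{|\gamma_{*}|}$, I would argue by contradiction. If $u>0$ solves the inequality, then $\mathscr{L}_{H}u\ge|x|^{\beta}u^{q}\ge0$, so Lemma \ref{lg} gives, for every $\Omega_{0}\subset\subset\Omega$, a constant $c_{0}>0$ with $u\ge c_{0}|x|^{\gamma_{*}}$ on $\mathcal{C}_{\Omega_{0}}^{2\rho}$. The key reduction would be the ground-state substitution $u=h\,w$ with $h=r^{\gamma_{*}}\phi$: since $\mathscr{L}_{H}h=0$ one has $\mathscr{L}_{H}u=-h^{-1}\mathrm{div}(h^{2}\nabla w)$, so
\[
-\mathrm{div}\!\big(|x|^{2\gamma_{*}}\phi^{2}\nabla w\big)\ \ge\ |x|^{\beta+(q+1)\gamma_{*}}\,\phi^{q+1}\,w^{q}\quad\text{in }\mathcal{C}_{\Omega}^{\rho},\qquad w=\frac{u}{h}\ge c_{0}\ \text{on }\mathcal{C}_{\Omega_{0}}^{2\rho}.
\]
To this degenerate divergence-form inequality I would then apply a Mitidieri--Pohozaev nonlinear-capacity estimate. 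Fix $\Omega_{1}\subset\subset\Omega_{0}$; for $R>\rho$ large take $\zeta\in C_{c}^{\infty}(\mathcal{C}_{\Omega_{0}}^{R,4R})$ with $\zeta\equiv1$ on $\mathcal{C}_{\Omega_{1}}^{2R,3R}$, $0\le\zeta\le1$, $|\nabla\zeta|\lesssim R^{-1}$, $|D^{2}\zeta|\lesssim R^{-2}$. Testing with $\zeta^{m}$ ($m$ large), integrating by parts twice (all boundary terms vanish since $\mathrm{supp}\,\zeta\subset\subset\mathcal{C}_{\Omega}^{\rho}$), using $|\mathrm{div}(|x|^{2\gamma_{*}}\phi^{2}\nabla\zeta^{m})|\lesssim R^{2\gamma_{*}-2}$ on $\mathrm{supp}\,\zeta$ — where $\phi$ is bounded between two positive constants — and absorbing the $w^{q}$-term via Young's inequality, one obtains
\[
\int_{\mathcal{C}_{\Omega_{1}}^{2R,3R}}|x|^{\beta+(q+1)\gamma_{*}}\phi^{q+1}w^{q}\,dx\ \le\ C\,R^{\,\Theta},\qquad \Theta=N+\gamma_{*}-\frac{\beta+2q}{q-1}.
\]
On the other hand $w\ge c_{0}$ and $\phi\gtrsim1$ on $\mathcal{C}_{\Omega_{1}}^{2R,3R}$ bound the left-hand side below by $c\,R^{\,\Theta_{0}}$, $\Theta_{0}=N+\beta+(q+1)\gamma_{*}$, and a short computation gives
\[
\Theta-\Theta_{0}=-\frac{q}{q-1}\big[(q-1)\gamma_{*}+\beta+2\big],
\]
which, since $\gamma_{*}<0$, is $<0$ exactly when $q<1+\frac{\beta+2}{|\gamma_{*}|}$ and $=0$ at the critical exponent. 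Thus for $q<1+\frac{\beta+2}{|\gamma_{*}|}$, letting $R\to\infty$ forces $0<c\le CR^{\Theta-\Theta_{0}}\to0$, a contradiction; for $q=1+\frac{\beta+2}{|\gamma_{*}|}$ one instead sums the capacity estimate over the dyadic annuli $\mathcal{C}_{\Omega_{1}}^{2^{j},2^{j+1}}$, $j\to\infty$, obtaining a contradiction because the left-hand sides add up to $+\infty$ while each right-hand side carries a fixed decaying weight.

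\emph{Main obstacle.} The delicate point I expect is precisely the necessity part at the critical exponent $q=1+\frac{\beta+2}{|\gamma_{*}|}$, where the plain capacity inequality only yields $\Theta=\Theta_{0}$; there the contradiction must be extracted through the dyadic-summation refinement (or an equivalent iteration), and this is exactly where the standing hypothesis $q>1$ is used. Everything else — the separation-of-variables computation for existence and the integration-by-parts bookkeeping for the capacity estimate — is routine, though one must be careful that all cutoffs are supported away from the vertex $r=0$ and from $\partial\Omega$ so that both the weights $|x|^{2\gamma_{*}}\phi^{2}$ and $\phi^{-1}$ stay bounded there.
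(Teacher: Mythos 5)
You should note first that the paper does not actually prove this lemma: it is imported as \cite[Theorem 1.2]{LLM06} and used as a black box (e.g.\ in part (iii) of the proof of Theorem \ref{thm1}), so your argument has to stand on its own rather than be compared with an in-paper proof. Most of it does stand: the existence half (supersolution $c\,r^{\gamma}\phi(\omega)$ with $\gamma\in(\gamma_*,\gamma^*)$ for $\mu<C_{H,\Omega}$, logarithmic correction $r^{\gamma_*}\log^{\tau}(\sigma r)\phi$ when $\mu=C_{H,\Omega}$, exactly as the paper itself does in Section 5) and the strictly subcritical nonexistence half (Lemma \ref{lg}, the ground-state substitution $u=r^{\gamma_*}\phi\,w$, and a Mitidieri--Pohozaev capacity estimate) are sound, and your exponent bookkeeping $\Theta-\Theta_0=-\frac{q}{q-1}\bigl[(q-1)\gamma_*+\beta+2\bigr]$ checks out.

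The genuine gap is the critical case $q=1+\frac{\beta+2}{|\gamma_*|}$, precisely the point you flagged as delicate. At criticality $\beta+q\gamma_*=\gamma_*-2$, so $\beta+(q+1)\gamma_*=2\gamma_*-2$ and the per-annulus lower bound coming from $w\ge c_0$ is $c\,R^{\Theta_0}$ with $\Theta_0=N-2-2|\gamma_*|$, while the capacity bound is $C\,R^{\Theta}$ with $\Theta=\Theta_0$. Since $|\gamma_*|\ge\frac{N-2}{2}$, one has $\Theta_0\le 0$, with equality only for $\mu=C_{H,\Omega}$. Consequently your dyadic-summation claim fails in both subcases: if $\mu<C_{H,\Omega}$ then $\Theta_0<0$ and the left-hand contributions $c\,2^{j\Theta_0}$ sum to a \emph{finite} quantity, not $+\infty$; if $\mu=C_{H,\Omega}$ then $\Theta=0$ and each right-hand side is a constant, not a ``fixed decaying weight''. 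In neither subcase do you get ``LHS diverges while RHS is summable''; indeed the capacity inequality with the crude bound $w\ge c_0$ is scale-consistent at criticality (both sides behave like $R^{\Theta_0}$), so no contradiction can come out of it, with or without a H\"older refinement. Excluding the critical exponent needs an additional mechanism, which is what \cite{LLM06} supplies: at criticality the lower bound feeds back as $\mathscr{L}_H u\ge c|x|^{\gamma_*-2}$ on a subcone, which forces the improved bound $u\ge c|x|^{\gamma_*}\log|x|$ there, and iterating such logarithmic improvements (compare Lemma 4.15 of \cite{LLM06} for $\mu=C_{H,\Omega}$) eventually contradicts the a priori estimates. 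Without this (or an equivalent) refinement, your nonexistence argument only covers $q<1+\frac{\beta+2}{|\gamma_*|}$ strictly.
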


\section{An a priori estimate}

In this section we establish the following a priori estimate for positive solutions of \eqref{eq0} in $\mathcal{C}^\rho_\Omega$. 

\begin{prop}\label{pconea1}
Let $\rho > 0$ and $u \in C^2(\mathcal{C}_{\Omega}^{\rho})\cap C(\overline{\mathcal{C}_{\Omega}^{\rho}})$ be a positive solution of \eqref{eq0} in $\mathcal{C}_{\Omega}^{\rho}$. For $R >\max\{\rho, e\}$ denote $\eta_R(x) = \eta(\,\abs{x}/R)$ where $\eta \in C_c^1(\R)$ is such that
\[0 \leq \eta \leq 1,\;\;\; \text{supp }\eta \subset [1, 4]\;\;\;\text{ and } \eta \equiv  1 \text{ on } [2, 3].\]
Then, for any  $\lambda > 4$ and $0\leq m <1$, there exists $C = C(N, m, \mu, \alpha) > 0$ such that
\begin{equation}\label{des}
\left(\int_{C_{\Omega}^{\rho}} u^{(p + q - m)/2} \phi (\omega)^{1/2} \eta_R^{\lambda/2} dx\right)^2 \leq C R^{\alpha-2}  \int_{C_{\Omega}^{\rho}} u^{1 - m}\phi (\omega) \eta_R^{\lambda - 2} dx,
\end{equation}
where $\phi > 0$ is the first eigenfunction of $-\Delta_{S^{N - 1}}$ in $\o$ such that $\max_{\oo}\phi = 1$. 
\end{prop}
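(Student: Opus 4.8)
The plan is to test \eqref{eq0} against $\psi=u^{-m}\phi(\omega)\,\eta_R^{\lambda}$, where $\phi$ is extended to $\mathcal C_\Omega^\rho$ as a function homogeneous of degree $0$, so that $-\Delta\phi=\lambda_1|x|^{-2}\phi$ in $\mathcal C_\Omega^\rho$ and $\nabla\phi$ is everywhere tangential to the spheres $\{|x|=r\}$, hence orthogonal to the radial field $\nabla(\eta_R^{\lambda})$. Since $\psi$ is not quite admissible (it is not compactly supported in the open cone, and $u^{-m}$ need not be bounded near $\partial\mathcal C_\Omega$), one first works with $(u+\varepsilon)^{-m}$ in place of $u^{-m}$ — equivalently on $\mathcal C_{\Omega'}^{R,4R}$ with $\Omega'\subset\subset\Omega$ — and lets $\varepsilon\downarrow0$ at the end. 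Multiplying \eqref{eq0} by $\psi\ge0$, integrating over $\mathcal C_\Omega^\rho$ and using Green's formula gives, with no boundary contribution (because $\phi|_{\partial\Omega}=0$ and $\eta_R$ vanishes near $|x|=\rho$),
\[
\int_{\mathcal C_\Omega^\rho}(|x|^{-\alpha}* u^p)\,u^{q-m}\phi\,\eta_R^{\lambda}\,dx+\mu\int_{\mathcal C_\Omega^\rho}\frac{u^{1-m}}{|x|^2}\phi\,\eta_R^{\lambda}\,dx\le\int_{\mathcal C_\Omega^\rho}\nabla u\cdot\nabla\psi\,dx .
\]

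Next I would expand $\nabla u\cdot\nabla\psi$ into three pieces and integrate by parts again in the last two. The piece $-m\int u^{-m-1}|\nabla u|^2\phi\,\eta_R^{\lambda}$ is $\le0$ and is discarded. For the piece containing $\nabla u\cdot\nabla\phi$, write $u^{-m}\nabla u=(1-m)^{-1}\nabla(u^{1-m})$, integrate by parts, and use $-\Delta\phi=\lambda_1|x|^{-2}\phi$ together with $\nabla\phi\cdot\nabla(\eta_R^{\lambda})=0$; this produces $\frac{\lambda_1}{1-m}\int|x|^{-2}u^{1-m}\phi\,\eta_R^{\lambda}$ plus a lateral boundary term $\propto\int u^{1-m}\eta_R^{\lambda}\,\partial_\nu\phi$, which is $\le0$ by Hopf's lemma and is discarded as well. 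For the piece containing $\nabla u\cdot\nabla(\eta_R^{\lambda})$, a further integration by parts (moving the derivative onto $\eta_R^{\lambda}$, again killing cross terms by orthogonality) together with the elementary bound $|\Delta(\eta_R^{\lambda})|\le CR^{-2}\eta_R^{\lambda-2}$ on $\operatorname{supp}\eta_R$ gives a contribution $\le\frac{C}{R^2}\int u^{1-m}\phi\,\eta_R^{\lambda-2}$ (alternatively, for $m>0$ this piece can be estimated directly by a weighted Young's inequality, the resulting $\int u^{-m-1}|\nabla u|^2\phi\,\eta_R^{\lambda}$ being absorbed into the discarded good term). Finally, using $|x|^{-2}\le R^{-2}$ and $\eta_R\le1$ on $\operatorname{supp}\eta_R$ to absorb the residual Hardy-type term $\big(\tfrac{\lambda_1}{1-m}-\mu\big)\int|x|^{-2}u^{1-m}\phi\,\eta_R^{\lambda}$, one arrives at
\[
\int_{\mathcal C_\Omega^\rho}(|x|^{-\alpha}* u^p)\,u^{q-m}\phi\,\eta_R^{\lambda}\,dx\ \le\ \frac{C}{R^{2}}\int_{\mathcal C_\Omega^\rho}u^{1-m}\phi\,\eta_R^{\lambda-2}\,dx .
\]

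It remains to bound the square on the left of \eqref{des} by the left side of the last display, up to the factor $R^{\alpha}$. Since the integrand there vanishes unless $|x|\in[R,4R]$, I drop from the convolution all $y$ with $|y|\notin[R,4R]$; for $x,y$ with $|x|,|y|\in[R,4R]$ one has $|x-y|\le8R$, so $|x-y|^{-\alpha}\ge(8R)^{-\alpha}$, whence
\[
\int_{\mathcal C_\Omega^\rho}(|x|^{-\alpha}* u^p)\,u^{q-m}\phi\,\eta_R^{\lambda}\,dx\ \ge\ (8R)^{-\alpha}\Big(\int_{\mathcal C_\Omega^{R,4R}}u^p\,dy\Big)\Big(\int_{\mathcal C_\Omega^\rho}u^{q-m}\phi\,\eta_R^{\lambda}\,dx\Big).
\]
On the other hand, writing $u^{(p+q-m)/2}\phi^{1/2}\eta_R^{\lambda/2}=u^{p/2}\cdot\big(u^{(q-m)/2}\phi^{1/2}\eta_R^{\lambda/2}\big)$ and applying Cauchy--Schwarz on the annulus $\mathcal C_\Omega^{R,4R}$ (where $\eta_R$ is supported) gives
\[
\Big(\int_{\mathcal C_\Omega^\rho}u^{(p+q-m)/2}\phi^{1/2}\eta_R^{\lambda/2}\,dx\Big)^2\le\Big(\int_{\mathcal C_\Omega^{R,4R}}u^p\,dy\Big)\Big(\int_{\mathcal C_\Omega^\rho}u^{q-m}\phi\,\eta_R^{\lambda}\,dx\Big).
\]
Combining the last two displays with the previous one yields $\big(\int u^{(p+q-m)/2}\phi^{1/2}\eta_R^{\lambda/2}\big)^2\le (8R)^{\alpha}\int(|x|^{-\alpha}* u^p)u^{q-m}\phi\,\eta_R^{\lambda}\le CR^{\alpha-2}\int u^{1-m}\phi\,\eta_R^{\lambda-2}$, which is \eqref{des}.

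The crux of the argument — and the place I expect to spend the most effort — is the rigorous justification of the integrations by parts: because no boundary condition is imposed on $u$ on $\partial\mathcal C_\Omega$ and $\Omega$ need not be smooth, one must check that $\psi$ is a legitimate (bounded, $C^1$) test function and that every boundary integral it generates is either identically zero (thanks to $\phi|_{\partial\Omega}=0$) or of the favourable sign (thanks to $\partial_\nu\phi\le0$). This is precisely why the regularised/localised test functions mentioned in the introduction are used; once the displayed intermediate estimate is secured, the remaining steps — the convolution lower bound and Cauchy--Schwarz — are purely soft.
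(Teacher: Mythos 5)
Your overall scheme is the same as the paper's: test with $u^{-m}\times(\hbox{angular eigenfunction})\times\eta_R^{\lambda}$, discard the term $-m\int u^{-m-1}|\nabla u|^2(\cdot)$, use the orthogonality of the angular gradient of the eigenfunction to the radial gradient of $\eta_R^{\lambda}$ together with $|\Delta(\eta_R^{\lambda})|\le CR^{-2}\eta_R^{\lambda-2}$ and $|x|\ge R$ to get $\int(|x|^{-\alpha}*u^p)u^{q-m}\phi\,\eta_R^{\lambda}\le CR^{-2}\int u^{1-m}\phi\,\eta_R^{\lambda-2}$, and then conclude via the bound $|x-y|\le 8R$ in the convolution and Cauchy--Schwarz. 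Those last two ``soft'' steps are fine and agree with the paper.

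The genuine gap is exactly at the point you yourself flag as the crux, and your proposed fixes do not close it. You integrate by parts over $\mathcal{C}_\Omega^{\rho}$ up to the lateral boundary and invoke Hopf's lemma to get $\partial_\nu\phi\le 0$ on $\partial\Omega$; but $\Omega$ is only assumed to be a relatively open connected subset of $S^{N-1}$, so $\partial\Omega$ has no smoothness, $\phi$ need not be $C^1$ up to $\partial\Omega$ (so $\partial_\nu\phi$ and the lateral boundary integral may not even be defined), and Hopf's lemma is unavailable. Moreover $u\in C^2(\mathcal{C}_\Omega^\rho)\cap C(\overline{\mathcal{C}_\Omega^\rho})$ only, so $\nabla u$ need not extend to the lateral boundary and the very first Green's formula up to $\partial\mathcal{C}_\Omega$ is not justified. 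Your two remedies do not repair this: replacing $u^{-m}$ by $(u+\varepsilon)^{-m}$ only addresses possible vanishing of $u$ at the boundary, not the regularity of $\phi$, $u$ or of the domain; and restricting the integration to $\mathcal{C}_{\Omega'}^{R,4R}$ with $\Omega'\subset\subset\Omega$ while keeping the eigenfunction $\phi$ of $\Omega$ creates a new, sign-indefinite lateral term $\int_{\partial\mathcal{C}_{\Omega'}^{R,4R}}u^{-m}\phi\,\eta_R^{\lambda}\,\partial_\nu u\,d\sigma$ (since $\phi$ does not vanish on $\partial\Omega'$), over which you have no control. The paper's resolution is different in precisely this respect: it exhausts $\Omega$ by smooth subdomains $\Omega_\varepsilon\subset\subset\Omega$ and uses the first Dirichlet eigenfunction $\phi_\varepsilon$ of $\Omega_\varepsilon$ in the test function $u^{-m}\phi_\varepsilon\eta_R^{\lambda}$, whose support $\overline{\mathcal{C}_{\Omega_\varepsilon}^{R,4R}}$ is a compact subset of the open cone, so all integrations by parts take place where $u$ is genuinely $C^2$ and the domain is smooth; the favourable sign of the lateral term then follows from the elementary observation that on $\{\phi_\varepsilon=0\}$ the outer normal is $-\nabla_{S^{N-1}}\phi_\varepsilon/|\nabla_{S^{N-1}}\phi_\varepsilon|$, so $\nabla(\phi_\varepsilon\eta_R^{\lambda})\cdot\nu\le 0$ without any appeal to Hopf's lemma on $\partial\Omega$; finally one lets $\varepsilon\to 0$ using $\phi_\varepsilon\to\phi$. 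Incorporating this approximation (eigenfunction of the subdomain, not of $\Omega$) is what your write-up is missing.
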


\begin{proof}
Let $\{\o_{\ve}\}_{0<\ve <1}$ be a sequence of smooth subdomains in $\o$ such that 
$$
\Omega_\ve\subset \overline{\Omega}_{\ve'} \quad \mbox{ for all }\; 0<\ve<\ve'<1
$$
and
$$
\bigcup_{0<\ve<1}\Omega_{\ve}=\Omega.
$$ 
Let $\phi_{\ve}$ denote the first eigenfunction of $-\Delta_{S^{N - 1}}$ in $\o_{\ve}$ which satisfies $\max_{\oo_{\ve}}\phi_{\ve} = 1$. Notice that for all $\omega\in \Omega$ one has $\phi_\ve(\omega)\to \phi(\omega)$ as $\ve\to 0$.  
Define the test function $\varphi$ as
\begin{equation}\label{tf}
\varphi = u^{-m}\phi_{\ve}\eta_R^{\lambda}.    
\end{equation}
Then, supp$\, \varphi=\overline{\mathcal{C}_{{\o}_{\ve}}^{R, 4R}}$ and 
\begin{equation}\label{testt}
\nabla \varphi = -m u^{-m - 1} \phi_{\ve}\eta_R^{\lambda} \nabla u  + u^{-m}\nabla (\phi_{\ve}\eta_R^{\lambda}).
\end{equation}
Multiplying \eqref{eq0} by $\varphi$ and integrating over $\mathcal{C}_\Omega^\rho$ we find
$$
\int_{\mathcal{C}_{{\o}_{\ve}}^{\rho}} (|x|^{-\alpha} * u^p)u^q\varphi\leq 
-\int_{\mathcal{C}_{{\o}_{\ve}}^{\rho}} \varphi \Delta u - \mu \int_{\mathcal{C}_{{\o}_{\ve}}^{\rho}}\frac{u}{\abs{x}^2}\varphi.
$$
By Green's formula and \eqref{testt} it follows that
\begin{equation}\label{qe1}
\begin{aligned}
\int_{\mathcal{C}_{\o_{\ve}}^\rho } (|x|^{-\alpha}  * u^p)u^{q - m}\phi_{\ve}\eta_R^{\lambda} &+ m\int_{\mathcal{C}_{{\o}_{\ve}}^{R, 4R}}u^{-m - 1}\abs{\nabla u}^2 \phi_{\ve}\eta_R^{\lambda} \\
&\leq \int_{\mathcal{C}_{{\o}_{\ve}}^{R, 4R}} u^{-m}\nabla u \nabla (\phi_{\ve}\eta_R^{\lambda}) - 
\mu \int_{\mathcal{C}_{{\o}_{\ve}}^{R, 4R}} \frac{u}{\abs{x}^2}\varphi \\
&= \frac{1}{1 - m}\int_{\mathcal{C}_{{\o}_{\ve}}^{R, 4R}}\nabla u^{1 - m} \nabla (\phi_{\ve}\eta_R^{\lambda}) - \mu 
\int_{\mathcal{C}_{{\o}_{\ve}}^{R, 4R}} \frac{u}{\abs{x}^2}\varphi.
\end{aligned}
\end{equation}
By the divergence formula, we have
\begin{equation}\label{qe2}
\int_{\mathcal{C}_{{\o}_{\ve}}^{R, 4R}}\nabla u^{1 - m} \nabla (\phi_{\ve}\eta_R^{\lambda}) =  \int_{\partial\mathcal{C}_{{\o}_{\ve}}^{R, 4R}} u^{1 - m}\nabla (\phi_{\ve}\eta_R^{\lambda})\cdot \nu d\sigma - \int_{\mathcal{C}_{{\o}_{\ve}}^{R, 4R}}u^{1 - m}\Delta (\phi_{\ve}\eta_R^{\lambda}) dx,
\end{equation}
where $\nu$ is the outer unit normal vector at $\partial\mathcal{C}_{{\o}_{\ve}}^{R, 4R}$. We next claim that 
\begin{equation}\label{qe3}
\nabla (\phi_{\ve}\eta_R^{\lambda}) \cdot \nu \leq 0 \;\;\;\text{ on }\; \partial\mathcal{C}_{{\o}_{\ve}}^{R, 4R}.
\end{equation}
For $x \in \partial\mathcal{C}_{{\o}_{\ve}}^{R, 4R}$ we write $x = (r, \omega)$ where $r = \abs{x}$ and $\omega = \frac{x}{\abs{x}}$. Then, for any $x= (r, \omega) \in \partial\mathcal{C}_{{\o}_{\ve}}^{R, 4R}$ we have 
\[(r, \omega) \in (\{R\} \times \oo_{\ve}) \cup (\{4R\} \times \oo_{\ve}) \cup ([R, 4R], \partial \o_{\ve}).
\]
By the properties of $\eta_R$, we have 
$$
\eta_R (x) = |\nabla \eta_R(x)|=0\quad\mbox{ whenever }\; x= (r, \omega) \in (\{R\} \times \oo_{\ve}) \cup (\{4R\} \times \oo_{\ve}).
$$ 
Thus,
$$
\nabla (\phi_{\ve}\eta_R^{\lambda}) \cdot \nu = 0\;\;\; \text{ on } \;(\{R\} \times \oo_{\ve}) \cup (\{4R\} \times \oo_{\ve}).
$$
Now, let $x = (r, \omega) \in ([R, 4R], \partial \o_{\ve})$ so that $\phi_{\ve}(\omega) = 0$ and then
\begin{equation}\label{samm}
\nabla (\phi_{\ve}\eta_R^{\lambda}) = \phi_{\ve} \nabla \eta_R^{\lambda} + \eta_R^{\lambda}\nabla \phi_{\ve}= \frac{1}{\abs{x}}\eta_R^{\lambda}\nabla_{S^{N - 1}}\phi_{\ve}.
\end{equation}
Recall that $\nabla_{\SS}\phi_\ve(\omega)$ lies in the tangent plane to $\SS$ at $\omega=\frac{x}{|x|}\in \partial \Omega_\ve$. 
Further, $\nu(x)$ and $\nabla_{\SS}\phi_\ve(\omega)$ are both located in the tangent plane to $S^{N-1}$ at $\omega=\frac{x}{|x|}\in \partial \Omega_\ve$  and have opposite direction. This comes from the fact that $\partial \Omega_\ve=\{\phi_\ve=0\}$ is a level set of $\phi_\ve$ and $\nabla \phi_\ve(x)=\frac{1}{|x|}\nabla_{\SS}\phi_\ve(\omega)$  points towards inside of $\Omega_\ve$. Thus,  
$$
\nu(x)=-\frac{\nabla_{\SS}\phi_\ve(\omega)}{ | \nabla_{\SS}\phi_\ve(\omega)|}.
$$ 
Hence, by \eqref{samm} we find
$$
\nabla \big(\phi_\ve \eta^\lambda_R\big)(x) \cdot \nu(x)=-\frac{\eta_R^\lambda(x) }{|x|} | \nabla_{\SS}\phi_\ve (\omega) |\leq 0.
$$
This proves our claim \eqref{qe3}. 
Combining \eqref{qe1}, \eqref{qe2} and \eqref{qe3} we deduce
\begin{equation}\label{qe5}
\int_{\mathcal{C}_{{\o}_{\ve}}^{\rho}} (|x|^{-\alpha}* u^p)u^{q - m}\phi_{\ve}\eta_R^{\lambda} \leq \int_{\mathcal{C}_{{\o}_{\ve}}^{R, 4R}}u^{1 - m}\left\{  \frac{|\Delta (\phi_{\ve}\eta_R^{\lambda})|}{1-m}+|\mu| \frac{ \phi_{\ve}\eta_R^{\lambda} }{|x|^2}  \right\} .
\end{equation}
One has 
\begin{equation}\label{qe6}
\Delta (\phi_{\ve} \eta_R^{\lambda}) = \eta_R^{\lambda}\Delta \phi_{\ve} + 2\nabla \phi_{\ve} \nabla \eta_R^{\lambda} + \phi_{\ve} \Delta \eta_R^{\lambda}.
\end{equation}
By direct computation, we find
\[
\begin{split}
\abs{\eta_R^{\lambda}\Delta \phi_{\ve}} &= \frac{\lambda_1}{\abs{x}^2}\eta_R^{\lambda}\phi_{\ve} \leq CR^{-2}\eta_R^{\lambda} \phi_{\ve} ,\\[0.05in]
\abs{\phi_{\ve} \Delta \eta_R^{\lambda}} &= \lambda \phi_{\ve} \eta_R^{\lambda - 2} \Big| (\lambda - 1)\abs{\nabla \eta_R}^2 + \eta_R \Delta \eta_R\Big| \leq CR^{-2}\eta_R^{\lambda - 2}\phi_{\ve},\\[0.05in]
\nabla \phi_{\ve} \nabla \eta_R^{\lambda} &= \frac{\lambda}{R}\eta_R^{'}\eta_R^{\lambda - 1}\frac{1}{\abs{x}}\nabla_{S^{N - 1}}\phi_{\ve} \cdot \frac{x}{\abs{x}}.
\end{split}
\]
Recall that $\nabla_{S^{N - 1}}\phi_{\ve}$ lies in the tangent plane to the unit sphere $S^{N - 1}$ at $\frac{x}{\abs{x}} \in \o_{\ve}$ and is also orthogonal to $x$. Thus, $\nabla \phi_{\ve} \nabla \eta_R^{\lambda} = 0$ and \eqref{qe6} yields
\begin{equation}\label{qe7}
 |\Delta (\phi_{\ve} \eta_R^{\lambda})| \leq CR^{-2}\eta_R^{\lambda - 2}\phi_{\ve}(\omega)\;\; \text{ in }\;  \mathcal{C}_{\o_{\ve}}^{R, 4R}.
\end{equation}
Now, from \eqref{qe5} and \eqref{qe7} we find 
\begin{equation}\label{qe8}
\int_{\mathcal{C}_{\o_{\ve}}^{\rho}} (|x|^{-\alpha}  * u^p)u^{q - m}\phi_{\ve} \eta_R^{\lambda} \leq CR^{-2}\int_{\mathcal{C}_{\o_{\ve}}^{R, 4R}}u^{1 - m} \phi_{\ve} \eta_R^{\lambda - 2}.
\end{equation}
Furthermore, for $x, y \in \mathcal{C}_{\o_{\ve}}^{\rho, 4R}$, we have $\abs{x - y} \leq \abs{x} + \abs{y} \leq 8R$.
This yields
$$
|x|^{-\alpha}\ast u^p   \geq \int_{\mathcal{C}_{\o_{\ve}}^{\rho, 4R}} \frac{u^p(y)}{|x-y|^{\alpha}} dy  \geq C R^{-\alpha} \int_{\mathcal{C}_{\o_{\ve}}^{\rho, 4R}}u^p(y) dy.
$$
Thus, \eqref{qe8} implies
\begin{equation}\label{tata}
 \left(\int_{\mathcal{C}_{\o_{\ve}}^{\rho, 4R}}u^p\right)\left(\int_{\mathcal{C}_{\o_{\ve}}^{R, 4R}}u^{q - m}\phi_{\ve} \eta_R^{\lambda}\right) \leq C R^{\alpha-2} \int_{\mathcal{C}_{\o_{\ve}}^{R, 4R}}u^{1 - m} \phi_{\ve} \eta_R^{\lambda - 2}.
\end{equation}
Finally, by Holder's inequality we conclude that
$$
\left(\int_{\mathcal{C}_{\o_{\ve}}^{R, 4R}}u^{(p + q - m)/2}\phi_{\ve}^{1/2}(\omega) \eta_R^{\lambda/2} dx  \right)^2 \leq
C R^{\alpha-2} \int_{\mathcal{C}_{\o_{\ve}}^{R, 4R}}u^{1 - m}\phi_{\ve}(\omega)\eta_R^{\lambda - 2} dx.
$$
Letting $\ve\to 0$ in the above estimate we deduce \eqref{des}.
\end{proof}

\section{Proof of Theorem \ref{thm1}}

Part (a) follows directly from Lemma \ref{hrd}. We next assume $\mu\leq C_{H, \Omega}$ and prove part (b) in Theorem \ref{thm1}.

\noindent (i) Assume that \eqref{eq0} has a positive solution $u$ in $\mathcal{C}_\Omega^\rho$ and let $\Omega_0\subset\subset \Omega$ be a proper subdomain of $\Omega$. 
From Lemma \ref{lg} we derive $u\geq c_0 |x|^{\gamma_*}$ in $\mathcal{C}_{\Omega_0}^{2\rho}$ where $c_0>0$ is a constant. 
If  $p\leq \frac{N-\alpha}{|\gamma_*|}$, then,  by Lemma \ref{esm}(ii) it follows that $|x|^{-\alpha} \ast u^p=\infty$ in $\mathcal{C}_\Omega^{2\rho}$ which contradicts condition  \eqref{KKK}. 

\noindent (ii) We divide our argument into two steps. 
\smallskip

\noindent{\bf Step 1: } If $p+q\leq 1$ then \eqref{eq0} has no positive solutions. 

Assume by contradiction that \eqref{eq0} has a positive solution $u$.

If $p+q=1$, then, by taking $m=q\in (0,1)$ in \eqref{tata} we find
$$
\begin{aligned}
\int_{\mathcal{C}^{R, 4R}_{\Omega}}  u^{p} &\geq 
\int_{\mathcal{C}^{R, 4R}_{\Omega}}  u^{p} \phi(\omega)\eta^{\lambda-2}_R\\
&\geq C R^{2-\alpha} 
\Big(\int_{\mathcal{C}^{\rho, 4R}_\Omega} u^p\Big)\Big(\int_{\mathcal{C}^{\rho, 4R}_\Omega } \phi(\omega) \eta_R^\lambda\Big),
\end{aligned}
$$
where $\eta_R$ is the test function defined in the statement of Proposition \ref{pconea1}. The above estimate implies
$$
C\geq R^{2-\alpha} 
\int_{\mathcal{C}^{\rho,4R}_\Omega} \phi(\omega) \eta_R^\lambda\geq CR^{2-\alpha+N} \,,
$$ 
which yields a contradiction as $R\to \infty$. 

Assume now $p+q<1$. By letting $m=p+q\in (0,1)$ in \eqref{des} we find 

\begin{equation}\label{ha1}
\begin{aligned}
\int_{\mathcal{C}^{R,4R}_\Omega} u^{1-m}& \geq \int_{\mathcal{C}^{R,4R}_\Omega} u^{1-m}\phi(\omega) \eta_R^{\lambda-2} \\
&\geq CR^{2-\alpha}  \left(\int_{\mathcal{C}^{R,4R}_\Omega} u^{(p+q-m)/2} \phi(\omega)^{1/2} \eta^{\lambda/2}_R \right)^2\\
&= CR^{2-\alpha}  \left(\int_{\mathcal{C}^{R,4R}_\Omega} \phi(\omega)^{1/2} \eta^{\lambda/2}_R \right)^2\\
& \geq CR^{2N-\alpha+2}.
\end{aligned}
\end{equation}
Let $\Omega_0\subset\subset \Omega$ be a subdomain and $\rho_0>\max\{1,\rho\}$. For $x\in \mathcal{C}_{\Omega_0}$, $|x|=R>\rho_0$, we can find a real number $\delta\in (0,1)$ depending only on $\Omega$ and $\Omega_0$ such that $B_{R\delta}(x)\subset \mathcal{C}_\Omega$. Replacing eventually $\Omega_0$ with a smaller subdomain, we may assume that diam$(\Omega_0)<\delta/6$. Thus, for any $x\in \mathcal{C}_{\Omega_0}$, $|x|=R$ one has (see Figure 2 below):
\begin{equation}\label{ddd}
D:=\mathcal{C}_{\Omega_0}^{R-R\delta/6, R+R\delta/6}\subset B_{R\delta/3}(x)\subset \mathcal{C}_\Omega^\rho.
\end{equation}

\begin{figure}[ht!]
\begin{center}
  \includegraphics[width=3.5in ]{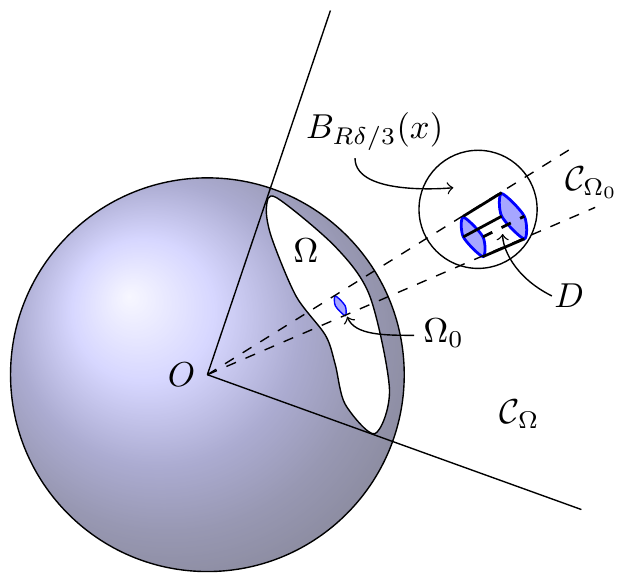}
  \caption{The domain $D$ and the ball $B_{R\delta/3}(x)$ in \eqref{ddd} on which one applies the Harnack inequality.}
  \end{center}
\end{figure}

With a similar argument as in the Proposition \ref{pconea1} in which we replace the pair $(R, 4R)$ by 
$(R-R\delta/6, R+R\delta/6)$ and $\Omega$ by $\Omega_0$, we derive as in \eqref{ha1} that 
\begin{equation}\label{ha2}
\int_{D} u^{1-m} \geq CR^{2N-\alpha+2}.
\end{equation}
By the weak Harnack inequality (see \cite[Theorem 1.2]{Tru67}) and \eqref{ha2}  we find
$$
\begin{aligned}
u(x)&\geq \inf_{B_{R\delta/3}(x)} u \\
&\geq C\left(\frac{1}{(R\delta/3)^N}\int_{B_{R\delta/3}(x)} u^{1-m}\right)^{1/(1-m)} \\
&\geq C\left(\frac{1}{(R\delta/3)^N}\int_{D} u^{1-m}\right)^{1/(1-m)} \\[0.1in]
&\geq CR^{(N-\alpha+2)/(1-m)}.
\end{aligned}
$$
Hence,
$$
u(x)\geq C|x|^\zeta\quad \mbox{ in }\; \mathcal{C}_{\Omega_0}^{\rho_0},
$$
where $\zeta=\frac{N-\alpha+2}{1-m}>0$. This contradicts condition \eqref{KKK} since for $x\in \mathcal{C}_{\Omega}^{\rho_0}$ fixed we have 
$$
\begin{aligned}
|x|^{-\alpha} \ast u^p & \geq 
\int_{y\in \mathcal{C}_\Omega^\rho, \,|y|>|x|} \frac{u^p(y)}{|x-y|^\alpha} dy  
\geq C \int_{\mathcal{C}_{\Omega}^{\rho}, \, |y|>|x|} \frac{|y|^{p \zeta}}{(2|y|)^\alpha} dy =\infty,
\end{aligned}
$$
by using the fact that $\zeta>0$ and $0\leq \alpha<N$. 

\medskip

\noindent{\bf Step 2: } If $p+q>1$ and \eqref{eq0} has a positive solution, then $p+q\geq 1+\frac{N-\alpha+2}{|\gamma_*|}$.

\medskip

Let $m\in (0,1)$ be close to 1 such that $p+q>2-m$. Let 
\begin{equation}\label{este7}
\kappa=\frac{p+q-m}{2-2m}>1\quad\mbox{ and }\quad \kappa'=\frac{\kappa}{\kappa-1}>1.
\end{equation}
With the same notations as in Proposition \ref{pconea1} and using H\"older's inequality we find
\begin{equation}\label{este8}
\int_{\mathcal{C}^{\rho}_\Omega} u^{1-m}\phi(\omega) \eta_R^{\lambda-2} \leq \left(\int_{\mathcal{C}^{\rho}_\Omega} u^{(p+q-m)/2}\phi(\omega)^{1/2} \eta_R^{\lambda/2} \right)^{1/\kappa} \left(\int_{\mathcal{C}^{\rho}_\Omega} \phi(\omega)^{\gamma} \eta_R^{\sigma} \right)^{1/\kappa'},
\end{equation}
where 
$$
\gamma=\frac{2\kappa-1}{2(\kappa-1)}>1\quad\mbox{ and }\quad \sigma=\frac{2(\lambda-2)\kappa-\lambda}{2(\kappa-1)}>0.
$$
Combining \eqref{este8} and \eqref{des} we deduce
\begin{equation}\label{brbr0}
\begin{aligned}
\left(\int_{\mathcal{C}^{\rho}_\Omega} u^{(p+q-m)/2}\phi(\omega)^{1/2} \eta_R^{\lambda/2} \right)^{2-1/\kappa} & \leq CR^{\alpha-2}  \left(\int_{\mathcal{C}^{\rho}_\Omega} \phi(\omega)^{\gamma} \eta_R^{\sigma} \right)^{1/\kappa'}\\[0.05in]
&\leq CR^{\alpha-2+N/\kappa'}.
\end{aligned}
\end{equation}
Take a subdomain $\Omega_0\subset\subset\Omega$. Then, for some constant $c>0$ we have 
\begin{equation}\label{eqwq}
\phi>c>0 \quad \mbox{ in }\; \Omega_0,
\end{equation} and the  estimate \eqref{brbr0} yields
\begin{equation}\label{esteimp}
\left(\int_{\mathcal{C}^{\rho}_{\Omega_0}} u^{(p+q-m)/2} \eta_R^{\lambda/2} \right)^{2-1/\kappa}\leq  CR^{\alpha-2+N/\kappa'}  \quad\mbox{ for } R>2\rho \mbox{ large}.
\end{equation}
We next use the estimate \eqref{gren} in Lemma \ref{lg} together with \eqref{esteimp} to derive 
\begin{equation}\label{qlog}
\begin{aligned}
CR^{\alpha-2+N/\kappa'}   &\geq \left(\int_{\mathcal{C}^{\rho}_{\Omega_0}} u^{(p+q-m)/2} \eta_R^{\lambda/2} \right)^{2-1/\kappa}\\
&\geq \left(c \int_{\mathcal{C}^{\rho}_{\Omega_0}} |x|^{(p+q-m)\gamma_*/2} \eta_R^{\lambda/2} \right)^{2-1/\kappa}\\
&\geq c  \left( R^{(p+q-m)\gamma_*/2} \int_{\mathcal{C}^{R, 4R}_{\Omega_0}} \eta_R^{\lambda/2} \right)^{2-1/\kappa}\\
&\geq  cR^{\big(N+\frac{p+q-m}{2}\gamma_*\big)(2-1/\kappa)} \quad\mbox{ for }R>2\rho \mbox{ large}.
\end{aligned}
\end{equation}
This implies 
$$
\Big(N+\frac{p+q-m}{2}\gamma_* \Big)\Big(2-\frac{1}{\kappa}\Big)\leq \alpha-2+\frac{N}{\kappa'},
$$
where $\kappa$, $\kappa'$ are given in \eqref{este7}. This last estimates finally gives
$$
p+q\geq 1+\frac{N-\alpha+2}{|\gamma_*|}.
$$

\noindent (iii)  Assume $p+q= 1+\frac{N-\alpha+2}{|\gamma_*|}$ and $q>1$. From the estimate \eqref{gren} of Lemma \ref{lg} and Lemma \ref{esm}(ii) we deduce
$$
|x|^{-\alpha}\ast u^p\geq C |x|^{N-\alpha+p\gamma_*}\quad\mbox{ in } \; \mathcal{C}_\Omega^{2\rho}.
$$
Thus, $u$ satisfies
$$
\mathscr{L}_H u\geq |x|^{N-\alpha+p\gamma_*}  u^q\quad\mbox{ in }\; \mathcal{C}_\Omega^{2\rho}.
$$
By Lemma \ref{pap} with $\beta=N-\alpha+p\gamma_*$ the above inequality has no positive solutions if $p+q= 1+\frac{N-\alpha+2}{|\gamma_*|}$.

\qed

\section{Proof of Theorem \ref{thm2}}

We shall discuss the construction of a positive solution to \eqref{eq0} separately for the cases $\mu<C_{H,\Omega}$ and $\mu=C_{H, \Omega}$. 

\medskip

\noindent{\bf Case 1:} $\mu<C_{H,\Omega}$. Then $\gamma_*<-\frac{N-2}{2}<0$ and we can find $\gamma_*<\gamma <-\frac{N-2}{2}$ close to $\gamma_*$ such that $p\abs{\gamma} \neq N$ and
\begin{equation}\label{cond1}
p > \frac{N - \alpha}{\abs{\gamma}}\, ,\;\;\; q> 1 + \frac{2 - \alpha}{\abs{\gamma}}\;\;\; \text{ and }\;\;\;p + q > 1 + \frac{N - \alpha + 2}{\abs{\gamma}}.
\end{equation}
Since $\mu < C_{H, \Omega}$ and $\gamma_*<\gamma <-\frac{N-2}{2}$,  we have  
\begin{equation}\label{inqqq}
\lambda_1 - \mu > \gamma (\gamma + N - 2).
\end{equation}
We construct a positive solution $U$ in the form $U=Cu$, where $C>0$ is a constant and 
\begin{equation}\label{uau}
u(x) = \phi (\omega)\abs{x}^{\gamma}.
\end{equation}
We first check that $u$ given by \eqref{uau} satisfies \eqref{KKK}. Indeed, since $p > \frac{N - \alpha}{\abs{\gamma}}$ and $0\leq \alpha<N$, we have
\begin{equation}\label{macmac}
\begin{aligned}
|x|^{-\alpha}* u^p & = \int_{\mathcal{C}_{\Omega}^{\rho,2|x|}} \frac{u^p(y)}{|x-y|^{\alpha}}  dy+\int_{\mathcal{C}_{\Omega}^{2|x|}} \frac{u^p(y)}{|x-y|^{\alpha}}  dy\\[0.05in]
&\leq \rho^{p\gamma}\! \int_{|x-y|<3|x|} \frac{dy}{|x-y|^{\alpha}}  +2^\alpha \int_{|y|>2|x|} |y|^{-\alpha+p\gamma} dy\\[0.05in]
&\leq C  \int_0^{3|x|}\!\! t^{N-\alpha-1} dt+C \int_{2|x|}^\infty  t^{N-\alpha+p\gamma-1}  dt\\
&<\infty.
\end{aligned}
\end{equation}
Thus, $u$ satisfies the second condition of \eqref{KKK}. 
By \eqref{inqqq} we find 
\begin{equation}\label{estb1}
\mathscr{L}_H u = \phi(\omega)\abs{x}^{\gamma - 2}\left(\lambda_1 - \mu - \gamma(\gamma + N - 2)\right) \geq c\phi(\omega)\abs{x}^{\gamma - 2}\quad \text{ in } \Co^{\rho},
\end{equation}
for some $c > 0$. Applying Lemma \ref{esm}(iii) for $f(x) = u^p(x) \leq \abs{x}^{p\gamma}$ in $\Co^{\rho/2}$, we deduce
\begin{equation}\label{estb2}
\begin{aligned}(\K1 * u^p)u^q \leq C\phi(\omega)^q 
\begin{cases}
\abs{x}^{N - \alpha + (p + q)\gamma} &\text{if } N-\alpha<p\abs{\gamma} < N\\[0.05in]
\abs{x}^{N - \alpha + q\gamma} &\text{if } p\abs{\gamma} > N
\end{cases}
\end{aligned}
\quad\mbox{in }\; \Co^{\rho}.
\end{equation}
Note that by \eqref{cond1} we have 
$$
\gamma-2>N - \alpha + (p + q)\gamma\quad\mbox{and}\quad \gamma-2>- \alpha + q\gamma.
$$
Using the above estimates and $q> 1$, from \eqref{estb1}-\eqref{estb2} we deduce
\begin{equation}\label{estb3}
{\mathscr L}_H u\geq C_1 (|x|^{-\alpha} * u^p)u^q \quad\mbox{in }\;\; \Co^{\rho},
\end{equation}
where $C_1>0$ is a constant. 
Let now $C=C_1^{1/(p+q-1)}$. Then, from \eqref{estb3} we deduce that $U=Cu$ is a positive solution of \eqref{eq0}. 

\medskip

\noindent \textbf{Case 2:} $\mu = C_{H, \Omega}$. Then, the quadratic equation \eqref{eqbeta} has two equal roots given by $\gamma_* = \gamma^* = \frac{2 - N}{2} < 0$. We construct a solution $u$ in the form
\begin{equation}\label{uuu}
u(x) = \phi(\omega)\abs{x}^{\gamma_*} \log^{\tau}(\sigma\abs{x})\quad\mbox{ where }\sigma>\frac{4}{\rho}\quad\mbox{ and }\;\;  0 < \tau <1.
\end{equation}
By direct computation we have
\begin{equation}\label{estb5}
\begin{aligned}
\mathscr{L}_H u &= \tau(1-\tau) \phi(\omega)\abs{x}^{\gamma_*-2}\log^{\tau - 2}(\sigma \abs{x})\;\;\;\text{ in }\mathcal{C}_{\Omega}^\rho.  
\end{aligned}
\end{equation}
Also, by Lemma \ref{esm}(iii) for 
$$
f=u^p\leq C |x|^{p\gamma_*}\log^{p\tau}(1+\abs{x})\quad\mbox{in }\; \mathcal{C}_{\Omega}^{\rho/2},
$$
we find the existence of a constant $C>0$ such that for all $x\in \mathcal{C}_\Omega^{\rho}$ one has 
\begin{equation}\label{estb6}
(|x|^{-\alpha} * u^p)u^q \leq C\phi^{q}(\omega) 
\begin{cases}
\abs{x}^{N - \alpha + (p + q)\gamma_*}\log^{ (p + q)\tau}(1 + \abs{x}) &\text{if }\; N-\alpha<p|\gamma_*|< N,\\
\abs{x}^{-\alpha + q \gamma_*}\log^{1 + (p + q)\tau}(1 + \abs{x}) &\text{if }\; p|\gamma_*|= N,\\
\abs{x}^{-\alpha + q\gamma_*}\log^{q\tau}(1 + \abs{x}) &\text{if }\; p|\gamma_*|> N.
\end{cases}
\end{equation}
Notice that 
$$
\gamma_*-2>\max\{-\alpha+q\gamma_*, N-\alpha+(p+q)\gamma_*\}.
$$
From \eqref{estb5}, \eqref{estb6} and the above inequality we deduce that $u$ satisfies \eqref{estb3}. Similar to the Case 1 above, $U=Cu$ is a solution of \eqref{eq0} for a specific constant $C>0$. 
\qed

\section{Proof of Theorem \ref{thsysinq}}

Part (a) follows directly from  Lemma \ref{hrd}.

(b)(i) Assume \eqref{mainsys} has a positive solution $(u, v)$. Take a proper subdomain $\Omega_0\subset\subset \Omega$.
From Lemma \ref{lg} we derive $v\geq c_0 |x|^{\gamma_*}$ in $\mathcal{C}_{\Omega_0}^{2\rho}$ where $c_0>0$ is a constant. 
If  $p\leq \frac{N-\alpha}{|\gamma_*|}$, then,  by Lemma \ref{esm}(ii) it follows that $|x|^{-\alpha} \ast v^p=\infty$ in $\mathcal{C}_\Omega^{2\rho}$ which contradicts the definition of the solution to \eqref{mainsys}.

\noindent It remains to raise a contradiction if $p, q>0$ satisfy $p+q\geq 2$ and
\begin{equation}\label{sysc00}
p+q<\frac{1}{s}\Big(1+\frac{2}{|\gamma_{*}|}\Big)+\frac{N-\alpha+2}{|\gamma_{*}|}.
\end{equation}

With the same notations as in Proposition \ref{pconea1} let us take $\varphi=\phi_\ve \eta_R^\lambda$ as a test function in \eqref{mainsys}; this corresponds to $m=0$ in \eqref{des}. We use the same approach as in Proposition \ref{pconea1} to find
\begin{equation}\label{csys1}
\left(\int_{\mathcal{C}^{\rho}_\Omega} v^{(p+q)/2} \phi(\omega)^{1/2} \eta^{\lambda/2}_R \right)^2 \leq C R^{\alpha-2} 
\int_{\mathcal{C}^{\rho}_\Omega} u\phi(\omega) \eta_R^{\lambda-2} 
\end{equation}
and
\begin{equation}\label{csys2}
\int_{\mathcal{C}^{\rho}_\Omega} u^{s} \phi(\omega) \eta^{\lambda}_R  \leq C R^{-2} 
\int_{\mathcal{C}^{\rho}_\Omega} v \phi(\omega) \eta_R^{\lambda-2} .
\end{equation}
We next claim that 
\begin{equation}\label{csys3}
\int_{\mathcal{C}^{\rho}_\Omega} v \phi(\omega) \eta_R^{\lambda-2} \leq C R^{N\big(1-2/(p+q)\big)+(\alpha-2)/(p+q)}  \left(\int_{\mathcal{C}^{\rho}_\Omega} u\phi(\omega) \eta_R^{\lambda-2}  \right)^{1/(p+q)}.
\end{equation}
If $p+q=2$, the above estimate follows directly from \eqref{csys1}. Indeed, using the fact that $0<\phi, \eta_R\leq 1$ in $\mathcal{C}_{\Omega}^\rho$ and $\lambda-2>\lambda/2$, from \eqref{csys1} we have
$$
\int_{\mathcal{C}^{\rho}_\Omega} v \phi(\omega) \eta_R^{\lambda-2} \leq 
\int_{\mathcal{C}^{\rho}_\Omega} v^{(p+q)/2} \phi(\omega)^{1/2} \eta^{\lambda/2}_R \leq C R^{(\alpha-2)/2} 
\left(\int_{\mathcal{C}^{\rho}_\Omega} u\phi(\omega) \eta_R^{\lambda-2}\right)^{1/2},
$$
which yields \eqref{csys3} in the case $p+q=2$.

If $p+q>2$, then by H\"older's inequality and \eqref{csys1} we have
$$
\begin{aligned}
\int_{\mathcal{C}^{\rho}_\Omega} v \phi(\omega) \eta_R^{\lambda-2} &\leq \left(\int_{\mathcal{C}^{\rho}_\Omega} v^{(p+q)/2} \phi(\omega)^{1/2} \eta^{\lambda/2}_R  \right)^{2/(p+q)} \left(\int_{\mathcal{C}^{\rho}_\Omega} \phi(\omega)^{\tau_1} \eta^{\tau_2}_R  \right)^{1-2/(p+q)}\\
&\leq C R^{N\big(1-2/(p+q)\big)}\left(\int_{\mathcal{C}^{\rho}_\Omega} v^{(p+q)/2} \phi(\omega)^{1/2} \eta^{\lambda/2}_R  \right)^{2/(p+q)}\\
&\leq C R^{N\big(1-2/(p+q)\big)}\left(R^{\alpha-2} 
\int_{\mathcal{C}^{\rho}_\Omega} u\phi(\omega) \eta_R^{\lambda-2}  \right)^{1/(p+q)}\\
&= C R^{N\big(1-2/(p+q)\big)+(\alpha-2)/(p+q)}  \left(\int_{\mathcal{C}^{\rho}_\Omega} u\phi(\omega) \eta_R^{\lambda-2}  \right)^{1/(p+q)},
\end{aligned}
$$
where
$$
\tau_1=\frac{p+q-1}{p+q-2}>1\quad\mbox{ and }\quad \tau_2=\frac{(p+q)(\lambda-2)-\lambda}{p+q-2}>1.
$$
This concludes the proof of our claim \eqref{csys3}.
By \eqref{csys2} and H\"older's inequality  we have 
\begin{equation}\label{csys4}
\begin{aligned}
\int_{\mathcal{C}^{\rho}_\Omega} u\phi(\omega) \eta_R^{\lambda-2} & \leq \left(\int_{\mathcal{C}^{\rho}_\Omega} u^s \phi(\omega) \eta^{\lambda}_R  \right)^{1/s} \left(\int_{\mathcal{C}^{\rho}_\Omega} \phi(\omega) \eta^{\lambda-2s/(s-1)}_R  \right)^{1-1/s}\\
&\leq CR^{N(1-1/s)} \left(\int_{\mathcal{C}^{\rho}_\Omega} u^s \phi(\omega) \eta^{\lambda}_R  \right)^{1/s} \\
&\leq CR^{N(1-1/s)-2/s} \left(\int_{\mathcal{C}^{\rho}_\Omega} v \phi(\omega) \eta^{\lambda-2}_R  \right)^{1/s}.
\end{aligned}
\end{equation}
Using \eqref{csys3} in \eqref{csys4} we find
\begin{equation}\label{csys5}
\left(\int_{\mathcal{C}^{R, 4R}_\Omega} v \phi(\omega) \eta^{\lambda-2}_R  \right)^{1-1/(s(p+q))}\leq CR^{\tau}\quad\mbox{ for $R>\rho$ large,}
\end{equation}
where
$$
\tau=\frac{1}{p+q}\Big(N\big(1-\frac{1}{s}\big)-\frac{2}{s}+N(p+q-2)+\alpha-2\Big).
$$
Let $\Omega_0\subset\subset \Omega$ be a smooth subdomain. Then, by Lemma \ref{lg}, there exists $c_0>0$ such that 
\begin{equation}\label{c00}
v(x)\geq c_0 |x|^{\gamma_*} \quad\mbox{ in }\; \mathcal{C}_{\Omega_0}^{2\rho}. 
\end{equation}
Using \eqref{csys5} and \eqref{c00}  we find
$$
CR^\tau\geq \left(c \int_{\mathcal{C}^{2R, 3R}_{\Omega_0}} |x|^{\gamma_*}  \right)^{1-1/(s(p+q))}\geq C R^{(N+\gamma_{*})\big(1-1/(s(p+q))\big)} \quad\mbox{ for $R>2\rho$ large.}
$$
This yields
$$
\frac{N+\gamma_{*}}{s}\Big(s(p+q)-1\Big) \leq N\Big(1-\frac{1}{s}\Big)-\frac{2}{s}+N(p+q-2)+\alpha-2,
$$
which implies 
$$
p+q\geq \frac{1}{s}\Big(1+\frac{2}{|\gamma_{*}|}\Big)+\frac{N-\alpha+2}{|\gamma_{*}|}.
$$ 
and this contradicts condition \eqref{sysc00}.

\medskip

(ii) Assume $s > 1+2/|\gamma_{*}|$, $q>1+(2-\alpha)/|\gamma_*|$ and that 
\begin{equation}\label{maas}
p|\gamma_*|>N-\alpha\quad\mbox{ and }\quad p+q>\frac{1}{s}\Big(1+\frac{2}{|\gamma_{*}|}\Big)+\frac{N-\alpha+2}{|\gamma_{*}|}.
\end{equation}

\noindent{\bf Case 1: $\mu<C_{H, \Omega}$.} Note that the second inequality of \eqref{maas} is equivalent to 
$$
\Big(p+q-\frac{1}{s}\Big)|\gamma_{*}|>N-\alpha+2+\frac{2}{s}.
$$
Thus, we may find $\gamma_{*}<b<  -\frac{N-2}{2}$  such that 
\begin{equation}\label{feqp}
s>1+\frac{2}{|b|}\, ,\qquad q>1+\frac{2-\alpha}{|b|} \, ,\qquad p|b|>N-\alpha
\end{equation}
and
\begin{equation}\label{feqpaa}
\Big(p+q-\frac{1}{s}\Big)|b|>N-\alpha+2+\frac{2}{s}.
\end{equation}
By moving $b$ closer to $\gamma_{*}$, we may also assume $p|b|\neq N$. As in the proof of Theorem \ref{thm2}, this  helps us to avoid the logarithmic terms in Lemma \ref{esm}(iii).    From \eqref{feqpaa} we deduce 
\begin{equation}\label{csys7}
\frac{b-2}{s}>N-\alpha+2+(p+q)b.
\end{equation}
On the other hand, from \eqref{feqp}$_1$ we have $s>1+2/|b|$ so
\begin{equation}\label{csys7a}
\frac{b-2}{s}>b.
\end{equation}
Thus, from \eqref{csys7} and \eqref{csys7a} we deduce
$$
\frac{b-2}{s}>\max\Big\{b,  N-\alpha+2+(p+q)b\Big\}.
$$
We thus may find $a\in (\gamma_{*}, b)$ such that 
\begin{equation}\label{csys8}
0>\frac{b-2}{s}>a>\max\Big\{b,  N-\alpha+2+(p+q)b\Big\}.
\end{equation}
Since $\mu<C_{H, \Omega}$ and $\gamma_*<a<b<-\frac{N-2}{2}$ we have 
\begin{equation}\label{csys9}
\lambda_1-\mu >\max\Big\{ a(a+N-2), b(b+N-2)\Big\}.
\end{equation}

Let now $u(x)= \phi(\omega)|x|^a$ and $v(x)= \phi(\omega)|x|^b$. Using \eqref{feqp}$_3$ we have $p|b|>N-\alpha$ and then with the same estimates as in \eqref{macmac} we deduce 
$
|x|^{-\alpha}\ast v^p<\infty$ for all $x\in \mathcal{C}_\Omega^\rho$.  Also, since by \eqref{csys8} one has $(b-2)/s>a$, we obtain 
\begin{equation}\label{baba1}
\begin{aligned}
\mathscr{L}_H v&=\Big(\lambda_1-\mu-b(b+N-2)\Big) \phi(\omega)|x|^{b-2}\\
&\geq  C \phi(\omega)^s |x|^{as} \\
& = C u^s\quad\mbox{ in }\; \mathcal{C}_\Omega^\rho.
\end{aligned}
\end{equation}
It remains to check that $u$ satisfies the first inequality of the system \eqref{mainsys}. To this aim, let us first estimate the nonlocal term $(|x|^{-\alpha}\ast v^p\big)v^q$ in $\mathcal{C}_\Omega^\rho$. We apply Lemma \ref{esm}(iii) for $f=v^p\leq |x|^{pb}$ in $\mathcal{C}^{\rho/2}_\Omega$ and $\beta =p|b|$. We obtain
$$
|x|^{-\alpha}\ast v^p\leq C
\begin{cases}
|x|^{N-\alpha+pb} &\quad\mbox{ if }p|b|<N\\
|x|^{-\alpha} &\quad\mbox{ if }p|b|>N
\end{cases}
\quad\mbox{ in }\; \mathcal{C}_\Omega^\rho,
$$
and then 
\begin{equation}\label{csys10}
\big(|x|^{-\alpha}\ast v^p\big)v^q\leq C
\begin{cases}
\phi(\omega)^q |x|^{N-\alpha+(p+q)b} &\quad\mbox{ if }p|b|<N\\
\phi(\omega)^q |x|^{-\alpha+qb} &\quad\mbox{ if }p|b|>N
\end{cases}
\quad\mbox{ in }\; \mathcal{C}_\Omega^\rho.
\end{equation}
\medskip

\noindent{\bf Case 1a: $p|b|<N$.} Let us observe that by  \eqref{csys8} we have 
$a-2>N-\alpha+(p+q)b$. Using this fact together with \eqref{csys9} and \eqref{csys10} we find 
\begin{equation}\label{baba02}
\begin{aligned}
\mathscr{L}_H u&=\Big(\lambda_1-\mu-a(a+N-2)\Big) \phi(\omega)|x|^{a-2}\\
&\geq C \phi(\omega)^q |x|^{N-\alpha+(p+q)b}\\
&\geq C\big( |x|^{-\alpha}\ast v^p\big) v^q\quad\mbox{ in }\; \mathcal{C}_\Omega^\rho.
\end{aligned}
\end{equation}

\medskip

\noindent{\bf Case 1b: $p|b|>N$.} From  \eqref{csys8} we have 
$0>a>b$ and from \eqref{feqp}$_2$ we know that $q>1+(2-\alpha)/|b|$. Thus, $a-2>b-2>-\alpha+qb$. 
As in Case 1a above, by \eqref{csys9} and \eqref{csys10}  we deduce
\begin{equation}\label{baba2}
\mathscr{L}_H u\geq C \phi(\omega) |x|^{a-2}
\geq C \phi(\omega)^q |x|^{-\alpha+qb} 
\geq C\big( |x|^{-\alpha}\ast v^p\big) v^q\quad\mbox{ in }\; \mathcal{C}_\Omega^\rho.
\end{equation}

From \eqref{baba1} and \eqref{baba02}-\eqref{baba2} there exists a constant $C>0$ such that
\begin{equation}\label{haha}
\begin{cases}
\displaystyle \mathscr{L}_H u \geq C\big( |x|^{-\alpha}\ast v^p\big) v^q\\[0.05in]
\displaystyle \mathscr{L}_H v \geq  C u^s
\end{cases}
\quad\mbox{ in }\; \mathcal{C}_\Omega^\rho.
\end{equation}
Let now $C_1, C_2>0$ be given by
$$
C_1^{s-1/(p+q)}=C^{1+1/(p+q)}\quad \mbox{ and }\quad C_2^{p+q-1/s}=C^{1+1/s}.
$$
Then $(U, V)=(C_1 u, C_2 v)$ is a positive solution of the system \eqref{mainsys}.

\medskip

\noindent{\bf Case 2: $\mu=C_{H, \Omega}$.} Let $u=v$ be given by \eqref{uuu}. We have seen in the proof of Theorem \ref{thm2} that $(u,v)$ satisfies $\mathscr{L}_H u \geq C\big( |x|^{-\alpha}\ast v^p\big) v^q$ in $\mathcal{C}_\Omega^\rho$.

Using $s>1+\frac{2}{|\gamma_*|}$ one has $\gamma_*-2>s\gamma_*$ and a direct calculation yields
$$
\mathscr{L}_H v \geq C \phi(\omega)\abs{x}^{\gamma_*-2}\log^{\tau - 2}(\sigma \abs{x})\geq C\phi(\omega)^s \abs{x}^{s \gamma_*}\log^{s\tau}(\sigma \abs{x})=C u^s \;\;\;\text{ in }\mathcal{C}_{\Omega}^\rho. 
$$
Thus, $(u,v)$ satisfies \eqref{haha} and then as in Case 1 above we deduce that $(U, V)=(C_1 u, C_2 v)$ is a positive solution of \eqref{mainsys}. This concludes our proof.
\qed


\begin{thebibliography}{99}

\bibitem{B92} C. Bandle, Positive solutions of Emden equations in cone-like domains, {\it Progress in Nonlinear Differential Equations Appl.}, {\bf 7} (1992), 71--75.

\bibitem{BE90} C. Bandle and M. Essen, On positive solutions of Emden equations in cone-like domains, {\it Arch. Rational Mech. Anal.}, {\bf 112} (1990), 319--338.

\bibitem{BP01} M.F. Bidaut-V\'eron and S. Pohozaev, Nonexistence results and estimates for some
    nonlinear elliptic problems, {\it J. Analyse Math.} {\bf 84} (2001), 1--49.

\bibitem{CZ1} H. Chen and F. Zou, Classification of isolated singularities of positive solutions for Choquard equations, {\it J. Differential Equations} {\bf 261} (2016), 6668--6698.

\bibitem{CZ2} H. Chen and F. Zou, Isolated singularities of positive solutions for Choquard equations in sublinear case, {\it Commun. Contemp. Math.} {\bf 20} (2018), Article  No. 1750040.

\bibitem{CZ3} H. Chen and F. Zou, Qualitative properties of singular solutions of Choquard equations, {\it Adv. Differential Equations} {\bf 28} (2023), 35--72.

\bibitem{DA10} J. T. Devreese and A. S. Alexandrov, Advances in Polaron Physics,
\newblock Springer Series in Solid-State Sciences, Springer, \textbf{159} (2010).  

\bibitem{G23} M. Ghergu, Partial Differential Inequalities with Nonlinear Convolution Terms, SpringerBriefs in Mathematics, 2022. 

\bibitem{GKS20} M. Ghergu, P. Karageorgis and G. Singh, Positive solutions for quasilinear elliptic inequalities and systems with nonlocal terms, {\it J. Differential Equations} {\bf 268} (2020), 6033--6066. 

\bibitem{GMM21} M. Ghergu, Y. Miyamoto and V. Moroz, Polyharmonic inequalities with nonlocal terms. {\it J. Differential Equations} {\bf  296} (2021), 799--821.


\bibitem{GT16} M. Ghergu and S.D. Taliaferro, Pointwise bounds and blow-up for Choquard-Pekar inequalities at an isolated singularity, {\it J. Differential Equations} {\bf 261} (2016), 189--217.

\bibitem{GTbook} M. Ghergu and S.D. Taliaferro, Isolated Singularities in Partial Differential Inequalities. Encyclopedia of Mathematics and its Applications, 161. Cambridge University Press, Cambridge, 2016.

\bibitem{Har28a} D.R. Hartree, The wave mechanics of an atom with a non-Coulomb central field, Part I. Theory and Methods, {\it Math.  Proc. Cambridge Philosophical Soc.}, \textbf{24} (1928), 89--110. 

\bibitem{Har28b} D.R. Hartree, The wave mechanics of an atom with a non-Coulomb central field, Part II. Some Results and Discussion,  {\it Math.  Proc. Cambridge Philosophical Soc.}, \textbf{24} (1928), 111--132. 

\bibitem{Har28c} D.R. Hartree, The wave mechanics of an atom with a non-Coulomb central field, Part III. Term Values and Intensities in Series in Optical Spectra, 
{\it Math.  Proc. Cambridge Philosophical Soc.}, \textbf{24} (1928), 426--437. 

\bibitem{KLM05} V. Kondratiev, V. Liskevich and V. Moroz,  Positive solutions to superlinear second-order divergence type elliptic equations in cone-like domains, {\it Ann. Inst. H. Poincar\'e C Anal. Non Lin\'eaire} {\bf 22} (2005),  25--43. 

\bibitem{KLMS05}  V. Kondratiev, V. Liskevich, V. Moroz and Z. Sobol, A critical phenomenon for sublinear elliptic equations in cone-like domains, {\it Bull. London Math. Soc.} {\bf 37} (2005),  585--591.

\bibitem{Lap02} G.G. Laptev, On the nonexistence of solutions of elliptic differential inequalities in conic domains, (in Russian) {\it Mat. Zametki} {\bf 71} (2002), 855--866; English translation in {\it Math. Notes} {\bf 71} (2002),  782--793.

\bibitem{Lap02b} G.G. Laptev, Nonexistence of solutions for parabolic inequalities in unbounded cone-like domains via the test function method, {\it  J. Evol. Equations} {\bf 2} (2002), 459--470.

\bibitem{Lie76} E. H. Lieb, Existence and uniqueness of the minimizing solution of Choquard's nonlinear equation, {\it Studies Appl. Math.} \textbf{57} (1976/77), 93--105.  

\bibitem{Lio80} P.-L. Lions, The Choquard equation and related questions,
{\it Nonlinear Anal.} (1980), 1063--1072.  

\bibitem{Lio84} P.-L. Lions, The concentration-compactness principle in the calculus of variations. The locally compact case. II, {\it Ann. Inst. H. Poincar\'e Anal. Non Lin\'eaire} (1984), 223--283.  

\bibitem{LLM06} V. Liskevich, S. Lyakhova and V. Moroz, Positive solutions to singular semilinear elliptic equations with critical potential on cone-like domains, {\it  Adv. Differential Equations} {\bf 4} (2006), 361--398.

\bibitem{MP01} E. Mitidieri and S.I. Pohozaev, A priori estimates and blow up of solutions to nonlinear partial differential equations, {\it Proc. Steklov Inst. Math.} {\bf 234} (2001),     1-367.

\bibitem{MV13a}  V. Moroz and J. Van Schaftingen, Nonexistence and optimal decay of supersolutions to Choquard equations in exterior domains, {\it J. Differential Equations}, {\bf 254} (2013), 3089--3145.

\bibitem{Pek54} S. Pekar, Untersuchung \"uber die Elektronentheorie
der Kristalle, Akademie Verlag, Berlin, 1954.

\bibitem{Tru67} N.S. Trudinger, On Harnack type inequalities and their application to quasilinear equations, {\it Comm. Pure Appl. Math.} {\bf 20} (1967), 721--747.



\end{thebibliography}
\end{document}